\def\R{\mathbb{R}}
\def\cA{\mathcal{A}}
\def\cE{\mathcal{E}}
\def\cF{\mathcal{F}}
\def\cI{\mathcal{I}}
\def\cT{\mathcal{T}}
\def\a{\alpha}
\def\g{\gamma}
\def\d{\delta}
\def\p{\partial}
\def\o{\omega}
\def\veps{\varepsilon}
\def\vrho{\varrho}
\def\O{\Omega}
\def\wto{\rightharpoonup}
\def\transp{{\sf T}}
\def\hx{\widehat{x}}
\newcommand{\dv}[1]{\,{\mathrm d}#1}
\newcommand{\dual}[3][]{#1\langle #2,#3#1\rangle}
\newcommand{\wcheck}[1]{#1\hspace{-.8ex}\mbox{\huge {\lower.45ex \hbox{$\textstyle \check{}$}}} \hspace{.5ex}}
\newcommand{\jump}[1]{\llbracket#1\rrbracket}   
\DeclareMathOperator{\id}{id}
\DeclareMathOperator{\diver}{div}
\DeclareMathOperator{\dist}{dist}
\DeclareMathOperator{\diam}{diam}
\let\oldmarginpar\marginpar
\renewcommand\marginpar[1]{
  \oldmarginpar[\raggedleft\footnotesize #1]
  {\raggedright\footnotesize #1}}
\newtheorem{definition}{Definition}
\newtheorem{lemma}[definition]{Lemma}
\newtheorem{proposition}[definition]{Proposition}
\newtheorem{theorem}[definition]{Theorem}
\newtheorem{remarks}[definition]{Remarks}
\newtheorem{example}[definition]{Example}
\numberwithin{definition}{section}
\definecolor{modmag}{RGB}{179,0,229}
\def\hT{{\widehat{T}}}
\def\V{\mathbb{V}}
\def\hv{\widehat{v}}
\def\hw{\widehat{w}}
\def\tSigma{\widetilde{\Sigma}}
\def\tnabla{\widetilde{\nabla}}
\def\tD{\widetilde{D}}
\def\tH{\widetilde{H}}
\def\tM{\widetilde{M}}
\def\tV{\widetilde{V}}
\def\tW{\widetilde{W}}
\def\tY{\widetilde{Y}}
\def\hY{\widehat{Y}}
\def\tZ{\widetilde{Z}}
\def\tE{\widetilde{E}}
\def\tVV{\widetilde{\V}}
\def\tcA{\widetilde{\cA}}
\def\tcF{\widetilde{\cF}}
\def\tvrho{\widetilde{\vrho}}
\def\ta{\widetilde{a}}
\def\tcE{{\widetilde{\cE}}}
\def\tcT{{\widetilde{\cT}}}
\def\tphi{\widetilde{\phi}}
\def\tcI{\widetilde{\cI}}
\newcommand{\aver}[1]{\{#1\}}
\DeclareMathOperator{\Diver}{Div}
\newcommand{\D}{\nabla}
\def\e{\varepsilon}
\def\AA{\mathcal{A}}
\def\weak{\rightharpoonup}
\def\d{\partial}
\begin{document}
\title[Thin sheet folding]{Modeling and simulation of thin sheet folding}
\author[S. Bartels]{S\"oren Bartels}
\address{Abteilung f\"ur Angewandte Mathematik,  
Albert-Ludwigs-Universit\"at Freiburg, Hermann-Herder-Str.~10, 
79104 Freiburg i.~Br., Germany}
\email{bartels@mathematik.uni-freiburg.de}

\author[A. Bonito]{Andrea Bonito}
\address{Texas A\& M University, College Station, TX 77843, USA}
\email{bonito@tamu.edu}

\author[P. Hornung]{Peter Hornung}
\address{Fakult\"at Mathematik, Technische Universit\"at Dresden,
Zellescher Weg 12--14, 01069 Dresden, Germany}
\email{peter.hornung@tu-dresden.de}

\date{\today}
\renewcommand{\subjclassname}{
\textup{2010} Mathematics Subject Classification}
\subjclass[2010]{74K20 74G65 65N30}
\begin{abstract}
The article addresses the mathematical modeling of the folding of a thin elastic
sheet along a prescribed curved arc. A rigorous model reduction from a general
hyperelastic material description is carried out under appropriate scaling 
conditions on the energy and the geometric properties of the folding arc in 
dependence on the small sheet thickness. The resulting two-dimensional model is 
a piecewise nonlinear Kirchhoff plate bending model with a continuity condition
at the folding arc. A discontinuous Galerkin method and an iterative scheme are
devised for the accurate numerical approximation of large deformations.
\end{abstract}

\keywords{Nonlinear bending, folding, interface, model reduction, numerical method}

\maketitle

\section{Introduction}
Because of their relevance in the development of new technologies
bending theories for thin sheets have attracted considerable
attention within applied mathematics in the past decades,
with renewed activity following the seminal article~\cite{FrJaMu02}. In
the present article the folding of thin elastic sheets along a
prepared curved arc is considered which naturally leads to bending effects,
cf.~Figure~\ref{fig:model_exp}. This setting has only been partially
addressed mathematically, e.g.,~\cite{DunDun82,BaBoHo22-prep}, but has recently attracted
considerable attention in applied sciences, 
cf., e.g.~\cite{Specketal15,CheMah19,ChDuMa19,PeHaLa19-book,CGGP19,LiPlJaFe21} and references therein. 
Particular applications
arise in the design of cardboxes and bistable switching devices that
make use of corresponding flapping mechanisms.
It is our aim to derive a mathematical description via a rigorous dimension
reduction from three-dimensional hyperelasticity and to devise
effective numerical methods that correctly predict large
deformations in practically relevant settings.  

To describe our approach we let $S \subset \R^2$ be a bounded
Lipschitz domain that represents the midplane of an asymptotically
thin sheet and let $\Sigma \subset S$ be a curve (with endpoints on $\d S$)
that models the crease, i.e., 
the arc along which the sheet is folded. The corresponding three-dimensional
model involves a thickness parameter $h>0$ and the thin domain
$\O_h = S \times (-h/2,h/2)$. The material is weakened (or damaged)
in a neighbourhood of width $r>0$ around the arc $\Sigma$ and 
we consider for given functions $W$ and $f_{\veps,r}$ the
hyperelastic energy functional for a deformation $z:\O_h \to \R^3$
\[
E^h(z) = \int_{\O_h} f_{\veps,r}(x) W(\nabla z) \dv{x}.
\]
Here $f_{\veps,r}(x) \in (0,1]$ is small with value $\veps>0$ close to 
the arc $\Sigma$ and approximately $1$ away from the arc, the function $W$ is
a typical free energy density. Hence, the factor $f_{\veps,r}$ models a
reduced elastic response of the material close to $\Sigma$. 
By appropriately relating the
thickness $h$, the intactness fraction $\veps$, and width
$r$ of the prepared region, we obtain for $(h,\veps,r) \to 0$
a meaningful dimensionally reduced model which seeks a minimizing
deformation $y:S\to \R^3$ for the functional 
\[
E_K(y) = \frac{1}{24} \int_{S \setminus \Sigma} Q(A) \dv{x'},
\]
where $A$ is the second fundamental form related to the 
parametrization $y$ which is weakly differentiable in $S$ with
second weak derivatives away from $\Sigma$, i.e., we consider
\[
y\in W^{2,2}(S\setminus \Sigma;\R^3) \cap W^{1,\infty}(S;\R^3).
\]
The quadratic form $Q$ is obtained by a pointwise relaxation procedure of the
Hessian of $W$. For isotropic materials it can explicitly be represented
in terms of the Lam\'e coefficients, cf.~\cite{FrJaMu02}.
Furthermore, the deformation $y$ is required to satisfy the pointwise isometry condition
\[
(\nabla y)^\transp (\nabla y) = I,
\]
which implies that no shearing and stretching effects occur.
The minimization of $E_K$ is supplemented by boundary conditions
and possible body forces. Note that for our scaling of the parameters
$\veps$, $r$, and $h$, e.g., $\veps = o(h)$ and $r = O(h)$,
no energy contributions such as a penalization of the folding angle
arise from the crease,
i.e., the material can freely fold along the arc and $y$ is in general
only continuous on $\Sigma$. In our analysis, the damaged set is required to 
be wide enough to ensure that the strain of the folded sheet can easily remain 
of order one, and the damaged material should be soft enough to ensure
that the energetic contribution of the fold becomes asymptotically negligible. On the other hand,
if the damaged material is too soft, the sheet could fall apart;
our compactness ensures the continuity of the asymptotic 
deformation across the fold.

In the absence of a crease $\Sigma$ the model coincides with
Kirchhoff's plate bending functional describing, e.g., the deformation of paper,
which was rigorously derived in~\cite{FrJaMu02}. We slightly modify the arguments
given there to take into account the fact that any asymptotic deformation which does
not only bend but
also folds has infinite Kirchhoff energy. This is because
the Hessian corresponding to a folding deformation is not square integrable.
Other variants of the setting from \cite{FrJaMu02} have been addressed,
in \cite{FJMM03,FrJaMu02b,HorVel18,Velc15,Schm07} and many others. 

\begin{figure}[h]
\begin{center}
\input{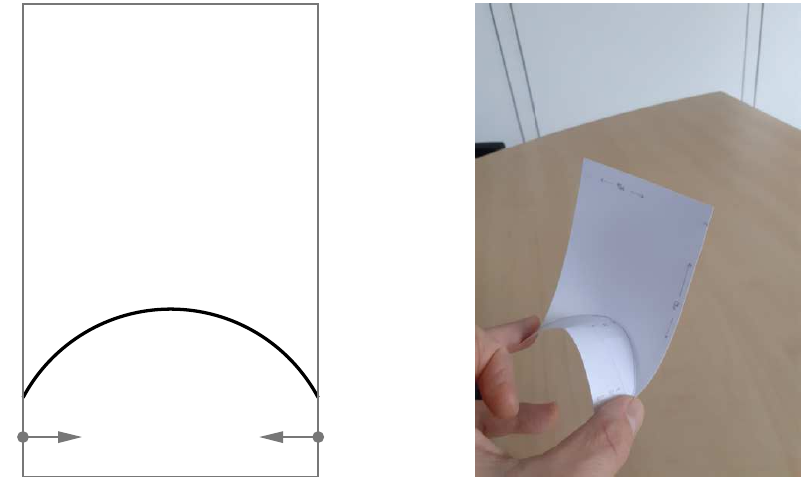_t} 
\end{center}
\caption{\label{fig:model_exp} {\em Left:} Geometry of a prepared
elastic sheet $S$ with folding arc $\Sigma$ that separates regions
$S_1$ andf $S_2$; boundary conditions are imposed in the points
$x_B,x_B'$. {\em Right:} When the boundary points $x_B$ and $x_B'$
are moved towards each other a flapping mechanism occurs.}
\end{figure}
\medskip

A typical setting and an experiment are shown in Figure~\ref{fig:model_exp}
and a coarse visual inspection indicates that the scaling relation $r \sim h$
together with a sufficiently strong compression of the material along the
arc lead to the desired folding effect. 
Interesting phenomena take place at the folding arc. It was
observed in~\cite{DunDun82} that a deformation on one side of
the arc locally restricts possible deformations on the other
side. In fact, only a finite number of scenarios is possible and
either the gradients coincide along the arc resulting in a smooth deformation
or a discontinuity occurs and the deformation is locally up to a sign uniquely determined. 
This important effect is a result of the isometry condition and the related physical
property that thin elastic sheets are unshearable in the
bending regime. Moreover, this effect arises in biology and inspires the
development of new technologies and design in architecture~\cite{Specketal15}.

Our numerical method to approximate minimizers is based on a discontinuous
Galerkin method from~\cite{BoNoNt21} which generalizes the approach based on a
nonconforming method of~\cite{Bart13a}. It allows us
to define a discrete curve $\tSigma$ as a union of element sides
that approximates $\Sigma$ and to account for possible discontinuities
of deformation gradients along $\Sigma$ by simply removing typical jump
terms in the discrete formulation. The continiuity condition on the deformation~$y$
along the interface $\Sigma$ is similar to a simple support boundary condition
in linear bending theories. For such problems the plate paradox, cf.~\cite{BabPit90},
states that convergence of approximations resulting from polyhedral
approximations of curved domains or interfaces fails in general. We therefore
consider piecewise quadratic approximations $\tSigma$ of $\Sigma$.
We note however that we do not observe significant differences to
approximations obtained with piecewise linear arcs $\tSigma$ in the nonlinear setting
under consideration.

We assume for simplicity and without loss of generality 
that $Q(A) = |A|^2$ and use that the
Frobenius norm of the second fundamental form equals the
Frobenius norm of the Hessian in the case of an isometry~$y$, i.e.,
$|A|^2 = |\nabla^2y|^2$. Our numerical method then uses discontinuous
deformations $\tY: S \to \R^3$ from an isoparametric
finite element space $\tVV^3 \subset L^2(S;\R^3)$ subordinated
to a partitioning $\tcT$ of $S$ and a suitably defined
discrete Hessian $\tH(\tY)$ in $S\setminus \tSigma$ which
define the discrete energy functional
\[\begin{split}
\tE_K(\tY) =&  \frac{1}{24} \int_{S\setminus \Sigma} |\tH(\tY)|^2 \dv{x'} \\
& + \frac{\g_0}{2} \int_{\cup \tcE^a} h_{\tcE}^{-3} |\jump{\tY}|^2 \dv{s}
+ \frac{\g_1}{2} \int_{\cup \tcE^a \setminus \tSigma} h_{\tcE}^{-1} |\jump{\tnabla \tY}|^2 \dv{s}.
\end{split}\]
The last two terms are typical stabilization terms with the mesh-size
function $h_\tcE$ on the skeleton $\tcE$ of the triangulation $\tcT$
that guarantee coercivity and enforce continuity
of $\tY$ and the elementwise gradient $\tnabla y$ across 
interelement sides or essential boundary conditions on certain boundary
sides as the mesh-size tends to zero. The union of all such sides is the set
of active sides which is denoted by $\tcE^a$. Crucial here is that
the penalized continuity of $\tnabla \tY$ is not imposed along the
discrete folding arc $\tSigma$ and that related consistency terms do not
enter in the definition of the discrete Hessian $\tH$.
The important isometry condition is imposed up to a tolerance via 
a sum of integrals on elements, i.e., we require that
\begin{equation}\label{e:discrete_admissible_set}
\tY \in \tcA = \Big\{\tZ \in \tVV^3: 
\sum_{T\in \tcT} \Big|\int_T (\tnabla \tZ)^\transp (\tnabla \tZ) - I \dv{x'} \Big| 
\le \tvrho  \Big\}
\end{equation}
for a suitable tolerance $\tvrho>0$. To obtain accurate approximations
of large deformations choosing a small parameter is desirable. Its
particular choice is dictated by available density results. If, e.g.,
the density of smooth folding isometries can be guaranteed 
similar to~\cite{Horn11b} even a pointwise controlled violation criterion
can be used;
otherwise a condition $\tvrho \ge c' h_\tcT$ has to be satisfied, cf.~\cite{BoNoNt21}.
Boundary conditions on a part of the boundary $\p S$
are included in the discrete problem via an appropriate definition of 
the jump terms. A justification of the discrete energy functionals
via $\Gamma$ convergence as in~\cite{Bart13a,BaBoNo17,BoNoNt21} is 
in preparation. 

The iterative solution of the constrained minimization problem 
follows the ideas of~\cite{Bart13a,BaBoNo17,BoNoNt21} and is realized
by a discrete gradient flow with a suitable linearization of the constraint.
In particular, we consider the linear space $\tcF[\tZ]$ of variations for a 
given deformation $\tZ\in \tcA$ defined via
\[
\tcF[\tZ] = \Big\{\tW \in \tVV^3: \int_T (\tnabla \tZ)^\transp (\tnabla \tW)
+ (\tnabla \tW)^\transp (\tnabla \tZ) \dv{x'}  = 0 \, \text{ for all } T\in \tcT \Big\}. 
\]
We furthermore let $(\cdot,\cdot)_{\star}$ be an inner product and $\tau>0$ 
a step size. Given an approximation $\tY^{k-1} \in \tcA$ we look for a 
correction $d_t \tY^k \in \tcF[\tY^{k-1}]$ such that
\begin{equation}\label{e:gradient_flow}
\big(d_t \tY^k, \tV\big)_{\star} + \ta_K\big(\tY^{k-1} + \tau d_t \tY^k, \tV\big) = 0
\end{equation}
for all $\tV \in \tcF[\tY^{k-1}]$ and with the discrete bilinear form $\ta_K$
associated with the discrete quadratic energy functional $\tE_K$.
The existence of a unique solution $d_t \tY^k$ is an immediate consequence of
the Lax--Milgram lemma and we define the new approximation 
\[
\tY^k = \tY^{k-1} + \tau d_t \tY^k.
\]
This implies the interpretation of the symbol $d_t$ as a backward difference
quotient. By choosing $\tV = d_t \tY^k$ one directly obtains the energy decay 
property for $\ell =0,1,\dots$, 
\[
\tE_K[\tY^\ell] + \tau \sum_{k=1}^\ell \|d_t\tY^k\|_{\star}^2  \le \tE_K[\tY^0] = \tE_K^0
\]
in particular, we see that $\|d_t \tY^k\|_{\star} \to 0$ as $k\to \infty$,
i.e., the iteration becomes stationary. Because of the orthogonality relation
included in the space $\tcF[\tY^{k-1}]$ we can bound the violation of the
isometry constraint by repeatedly replacing $\tY^k = \tY^{k-1} + \tau d_t \tY^k$, i.e.,
for all $T\in \tcT$ we have 
\[\begin{split}
\int_T [\tnabla \tY^\ell]^\transp \tnabla \tY^\ell - I \dv{x'}
&= \int_T [\tnabla \tY^0]^\transp \tnabla \tY^0 - I 
+ \tau^2 \sum_{k=1}^\ell [\tnabla d_t \tY^k]^\transp \tnabla d_t \tY^k \dv{x'}.
\end{split}\]
If the discrete gradient flow metric $(\cdot,\cdot)_{\star}$ 
controls the $L^2$ norm of the elementwise gradient,
we obtain from the energy decay law the estimate
\[
\sum_{T\in \tcT} \big| \int_T [\tnabla \tY^\ell]^\transp \tnabla \tY^\ell - I  \dv{x'} \big|
\le \veps^0 + c_\star \tE_K^0 \tau,
\]
where $\veps^0$ is the initial isometry violation. 
In particular, we see that if $\tau$ and $\veps^0$ are sufficiently small, an arbitrary
accuracy can be achieved and we have $\tY^\ell\in \tcA$ independently of the
number of iterations $\ell \ge 0$. 

Using the numerical scheme we simulate various scenarios that are
motivated by practical applications, e.g., how the shape of the folding arc 
affects the flapping mechanism, or address subtle analytical features
of solutions, e.g., the occurrence of energy concentrations  
when the curve $\Sigma$ has a kink. Besides that we illustrate the robustness
of the numerical method with respect to the choice of stabilization parameters and
discuss the construction of
suitable deformations that serve as starting values in the discrete gradient
flow. Our experiments show that large deformations in highly nontrivial settings can
be accurately computed with moderate resolution. 

The outline of the article is as follows. In Section~\ref{sec:prelim}
we describe the general setup and the dimensionally reduced model.
Its rigorous derivation is given in Section~\ref{sec:dim_red}. The
discontinuous Galerkin finite element method is derived and stated in
Section~\ref{sec:fem}.
Numerical experiments are reported in Section~\ref{sec:num_exp}.


\section{Preliminaries}\label{sec:prelim}

\subsection{Hyperelasticity for plates}

For a bounded Lipschitz domain $S\subset\R^2$ we consider a plate of thickness $h>0$
occupying the domain $\Omega_h = S\times I_h$ in the reference configuration. The elastic
energy stored in the configuration determined by a deformation $z : \Omega_h\to\R^3$ is given
by
$$
\int_{\Omega_h} W(\D z) \dv{x}.
$$
Here $W$ is a frame indifferent stored energy function and as in~\cite{FrJaMu02}
we impose the following conditions:

\begin{enumerate}
\item[(H1)] $W\in C^0(\R^{3\times 3})$ and $W\in C^2$ in a neighbourhood of $SO(3)$.
\item[(H2)] $W$ is frame indifferent, i.e., $W(F) = W(RF)$ for all $F\in\R^{3\times 3}$
and all $R\in SO(3)$. Moreover, $W(I) = 0$.
\item[(H3)] There is a constant $C$ such that $\dist^2_{SO(3)}\leq CW$;
here $\dist_{SO(3)} : \R^{3\times 3}\to [0, \infty)$
denotes the distance function from the set $SO(3)$. \label{coercive}
\end{enumerate}

To analyze the limiting behaviour as $h\to 0$ it is convenient to work on the fixed
domain 
$$
\Omega = S\times I,
$$
where $I = (-1/2,1/2)$. We define a rescaled deformation $y^h : \Omega\to\R^3$ by
setting $y^h(x', x_3) = z(x', hx_3)$. Then the (re-scaled) elastic energy is given by
$$
\widetilde{E}^h(y^h) = \int_{\Omega} W(\D_h y^h) \dv{x},
$$
where $\D_h y^h = (\D' y^h\ |\ \frac{1}{h}\d_3 y^h)$ and $\D' = (\d_1, \d_2)$.

\subsection{Notation}
Throughout this article we use  standard notation related to Sobolev spaces,
e.g., $W^{s,p}(U;\R^\ell)$ denotes the set of $s$ times weakly differentiable,
$\R^\ell$-valued functions in $L^p$ whose weak derivatives are $p$-integrable.
$L^p$ norms are often used without specifying a domain when there is no ambiguity, 
and we abbreviate the $L^2$ norm on $S$ by $\|\cdot\|$.
We occasionally omit target domains $\R^\ell$ when this is clear from the
context. The open ball of radius $r > 0$ around a point $x\in\R^n$ is denoted by $B_r(x)$.
For integral functionals ocurring below, it is often useful to specify their
integration domains explicitly, e.g., we write
\[
E(y; \tilde{S}) = \int_{\tilde{S}} F(y) \dv{x'}.
\]
For the canonical choice, e.g., $\tilde{S} = S$, this argument is usually
omitted. For $S\subset\R^2$ we identify maps defined on $S$ with their
trivial extension to $S\times\R$.

%
%
%

\subsection{Folded plates}

Our aim is to modify the arguments from \cite{FrJaMu02} in order to
allow for folding effects along a prescribed curve, see Figure~\ref{fig:model_exp}.
In applications, the folding curve $\Sigma$ is prescribed by weakening
the material along it.
\\
Throughout this article $S\subset\R^2$ is a bounded Lipschitz domain.
From now on $\Sigma\subset S$ is a Jordan arc with both endpoints on the same
connected component of $\d S$. More precisely, let $\sigma : [0, 1]\to\R^2$
be continuous and injective, set $\Sigma = \sigma(0, 1)$
and $\d\Sigma = \{\sigma(0), \sigma(1)\}$ and assume that $\Sigma\subset S$
and that $\d\Sigma$ is contained in one single connected
component of $\d S$. Then $S\setminus\Sigma$ consists of precisely two
connected components $S_1$ and $S_2$. We assume, in addition, that $\Sigma$
is such that both $S_1$ and $S_2$ are Lipschitz domains.
\\
This latter hypothesis
entails a great deal of regularity on $\Sigma$. In particular, $\Sigma$ is locally
a Lipschitz graph. Therefore, the area of the sets
$$
\Sigma_R = B_R(\Sigma) = \bigcup_{x\in \Sigma} B_R(x)
$$
converges to zero as $R\downarrow 0$.
As explained in the introduction, we let $r_h$, $\e_h >0$ be
parameters that define the width of the prepared region and the amount
of the material intactness. We then define $f^h : S\to [0, \infty]$ by
\begin{equation}
\label{defB}
f^h = \e_h \chi_{\Sigma_{r_h}} + 1 -  \chi_{\Sigma_{r_h}},
\end{equation}
where $\chi_M$ denotes the characteristic function of a set $M$.
With this we consider the (re-scaled) three-dimensional energy functional
$E^h : W^{1,2}(\Omega;\R^3)\to [0, \infty]$ 
\begin{equation}
\label{defE}
E^h(y) = \int_{\Omega} f^h(x') W\big(\D_h y^h(x)\big) \dv{x}.
\end{equation}
Passing to the thin film limit $h\to 0$ leads to a pointwise isometry
constraint, but does not exclude and discontinuities of the
gradients across the arc $\Sigma$:
we are led to the set of asymptotically admissible deformations
\[\begin{split}
\AA(S, \Sigma) &= \{u\in W^{1,2}(S;\R^3)\cap W^{2,2}(S\setminus\Sigma;\R^3) : \\
& \qquad \qquad (\D u)^\transp(\D u) = I\mbox{ a.e. on }S\}.
\end{split}\]
The corresponding asymptotic energy functional $E_K : W^{1,2}(S;\R^3)\to [0, \infty]$
is defined as 
\begin{equation}
\label{defEK}
E_K(y) =
\begin{cases}
\frac{1}{24}\int_{S\setminus \Sigma} Q(A) \dv{x'} &\mbox{ if }u\in\AA(S, \Sigma),  \\
+\infty &\mbox{ otherwise.}
\end{cases}
\end{equation}
Here, $A$ is the second fundamental form of the surface parametrized
by $y$ with unit normal $n = \p_1 y \times \p_2 y$, i.e.,
\[
A =  (\nabla n)^\transp (\nabla y),
\]
and $Q$ is obtained by relaxing, over the third column and row, the
quadratic form corresponding to the Hessian $D^2 W(I)$ of $W$
at the identity matrix, i.e.,
\[
Q(A) = \min_{d\in \R^3} D^2W (I)[(A\,|\,d),(A\,|\,d)],
\]
where for given $A\in \R^{2\times 2}$ the matrix $(A\,|\,d)\in \R^{3\times 3}$ is
obtained by consistently appending a row and column defined by $d\in \R^3$. Note
that by hypotheses~(H1)-(H3) we have the Taylor expansion
\[
W(I+hF) = \frac{h^2}2  D^2W(I)[F,F] + o(h^2)
\]
for $F\in \R^{3\times 3}$ and $h>0$. 

\begin{remarks}
(i) Observe that every $y\in\AA(S, \Sigma)$ belongs to
$W^{1,\infty}(S;\R^3)$, because $y\in W^{1,2}(S;\R^3)$ and its
derivatives are bounded almost everywhere on $S$ since
$(\D y)^\transp(\D y) = I$ almost everywhere. In particular, $y$ is continuous
on $S$.
\\
(ii) We recall that a Lipschitz function $f$ is in $W^{2,2}(S\setminus\Sigma)$
precisely if there is an $F\in L^2(S;\R^{2\times 2})$ such that
$
\D^2 f = F
$
in the sense of distributions on $S\setminus\Sigma$.
\end{remarks}

\section{Gamma-Convergence}\label{sec:dim_red}

The purpose of this section is to prove the following result.

\begin{theorem}\label{gamma}
%
Let $\e_h$, $r_h\in (0, \infty)$
be null sequences satisfying
\begin{equation}
\label{parameter-1}
\limsup_{h\to 0} \frac{h^2}{\e_h} < \infty
\end{equation}
and
\begin{equation}
\label{parameter-2}
\limsup_{h\to 0} \frac{h}{r_h} < \infty
\end{equation}
as well as
\begin{equation}
\label{parameter-3}
\limsup_{h\to 0}\frac{\e_h r_h}{h^2} = 0.
\end{equation}
Define $f^h$ as in \eqref{defB}, define
$E^h$ as in \eqref{defE} and define $E_K$ as in \eqref{defEK}.
\\
Then deformations with finite bending energy
are compact and $\frac{1}{h^2}E^h$
Gamma-converges to $E_K$. More precisely:
\begin{enumerate}
\item \label{gamma:compact}
Assume that $y^h\in W^{1,2}(\Omega, \R^3)$ are such that
$$
\limsup_{h\to 0}\frac{1}{h^2}E^h(y^h) < \infty.
$$
Then there exists
a subsequence (not relabelled) and $y\in\AA(S, \Sigma)$ such that
$y^h\weak y$ weakly in $W^{1,2}(\Omega)$ and
locally strongly in $W^{1, 2}((S\setminus\Sigma)\times I)$.
\item Assume that $y^h\weak y$ weakly in $W^{1,2}(\Omega)$. Then
$$
E_K(y)\leq\liminf_{h\to 0}E^h(y^h).
$$
\item \label{gamma:upper}
Let $y\in W^{1,2}(\Omega)$. Then there exist $y^h\in W^{1,2}(\Omega)$
such that
$$
\lim_{h\to 0}
\frac{1}{h^2}E^h(y^h) = E_K(y).
$$
\end{enumerate}
\end{theorem}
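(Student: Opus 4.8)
The plan is to follow the now-standard three-step structure for dimension reduction à la Friesecke--James--Müller, modifying each step to accommodate the weakened strip $\Sigma_{r_h}$ and the possible gradient discontinuity across $\Sigma$. Throughout, the decisive tool is the geometric rigidity estimate of~\cite{FrJaMu02}: on any Lipschitz subdomain $U$ there is a constant $C(U)$ and a rotation-valued map $R_h\in L^\infty(U;SO(3))$, piecewise constant on a mesoscopic grid of size $h$, with $\|\D_h y^h - R_h\|_{L^2(U)}^2 \le C(U)\int_U \dist^2(\D_h y^h, SO(3))$, together with $\|R_h - \overline R_h\|_{L^2}^2 \le C\int_U \dist^2$ for a single averaged rotation $\overline R_h$ (after a Poincaré/gradient estimate on $R_h$). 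The energy bound $\tfrac1{h^2}E^h(y^h)\le C$ combined with (H3) controls $\int_{\Omega} f^h \dist^2(\D_h y^h,SO(3)) \le Ch^2$, which on $S\setminus\Sigma_{r_h}$, where $f^h\equiv 1$, gives the full rigidity input, while on $\Sigma_{r_h}$ we only get the weaker bound $\int_{\Sigma_{r_h}}\dist^2 \le Ch^2/\e_h$, bounded by hypothesis~\eqref{parameter-1}.

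\medskip
\noindent\textbf{Compactness.} I would first apply rigidity separately on $S_1$ and $S_2$ (or on compactly contained Lipschitz subdomains $S_i'\Subset S_i$), since $\Sigma_{r_h}$ retreats to $\Sigma$ as $h\to 0$ and for $h$ small $S_i'\times I$ avoids the weak strip. This produces limiting maps $y|_{S_i}\in W^{2,2}(S_i;\R^3)$ with $\D_h y^h\to (\D' y\,|\,b)$ strongly in $L^2_{loc}((S\setminus\Sigma)\times I)$, where $b=\p_1 y\times\p_2 y$, and the isometry constraint $(\D' y)^\transp\D' y = I$ passes to the limit from $\dist(\D_h y^h,SO(3))\to 0$. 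The genuinely new point is to show $y\in W^{1,2}(S;\R^3)$, i.e.\ that the two one-sided traces of $y$ on $\Sigma$ coincide so that no crack opens. For this I would estimate, using a Poincaré/trace inequality on thin tubular neighbourhoods of $\Sigma$ of width $\sim r_h$ and the control $\|\D_h y^h\|_{L^2(\Sigma_{r_h}\times I)}^2\le C(r_h + h^2/\e_h)$ (the first term because $|\Sigma_{r_h}|\sim r_h$ and $|\D_h y^h|$ is bounded in $L^2$ up to the rigidity defect; more precisely $\|\D_h y^h - R_h\|_{L^2}^2\le Ch^2/\e_h$ and $\|R_h\|_{L^2(\Sigma_{r_h})}^2\lesssim r_h$), that the jump of the traces is bounded by a quantity tending to $0$; here~\eqref{parameter-2} ($h\lesssim r_h$) is used to make the tube geometrically nondegenerate and \eqref{parameter-1} to absorb the rigidity defect. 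Normalizing $y^h$ by subtracting affine maps (rigid motions) on each side and reconnecting them is the usual bookkeeping; the reconnection is legitimate precisely because the jump vanishes in the limit.

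\medskip
\noindent\textbf{Lower bound.} Given $y^h\weak y$ in $W^{1,2}(\Omega)$, I may assume the energies are bounded, so by the compactness step $y\in\AA(S,\Sigma)$ and $E_K(y)<\infty$. On any $S_i'\Subset S_i$ the factor $f^h\equiv 1$ and the argument is verbatim that of~\cite{FrJaMu02}: write $\D_h y^h = R_h(I + h G_h)$ with $G_h$ bounded in $L^2$, extract a weak limit $G$, identify its relevant $2\times 2$ block with the limiting second fundamental form via the structure of isometric immersions ($\D' y^\transp \D' b = -A$), and use weak lower semicontinuity of $F\mapsto \int D^2W(I)[F,F]$ together with the pointwise relaxation $Q(A)=\min_d D^2W(I)[(A|d),(A|d)]$ and the frame-indifference-forced null Lagrangian structure; then exhaust $S_i$ by such $S_i'$ and add the two contributions. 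The contribution from $\Sigma_{r_h}$ is simply discarded, being nonnegative, which is where no folding-angle penalty survives. This step I expect to be routine modulo the FJM machinery.

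\medskip
\noindent\textbf{Upper bound (recovery sequence).} If $y\notin\AA(S,\Sigma)$ then $E_K(y)=+\infty$ and any bounded-energy sequence would by compactness force $y\in\AA$, contradiction, so the trivial constant sequence (or $y^h = y$) works with $\liminf \ge E_K(y)=+\infty$; hence assume $y\in\AA(S,\Sigma)$. First reduce to smooth data by a density argument: approximate $y$ in $W^{2,2}(S\setminus\Sigma)\cap W^{1,\infty}(S)$ by isometries $y_\delta$ that are smooth up to $\Sigma$ on each side and still match continuously across $\Sigma$; for the two-sided construction one mollifies on each $S_i$ and corrects the trace mismatch by a thin boundary-layer interpolation along $\Sigma$ whose energy is $O(\delta)$-controlled — this uses the Lipschitz-graph regularity of $\Sigma$ asserted after the standing assumptions and is the analogue of the density results cited (\cite{Horn11b} and the like); because $E_K$ is continuous along such approximations it suffices to build recovery sequences for smooth $y$. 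For smooth $y$ one uses the classical Kirchhoff ansatz $y^h(x',x_3) = y(x') + h x_3 b(x') + \tfrac{h^2}{2}x_3^2 d(x')$ with $d$ the minimizer in the definition of $Q$, separately on each $S_i$; on $S\setminus\Sigma_{r_h}$ a Taylor expansion of $W$ using (H1)--(H2) gives energy density $\tfrac{h^2}{2}x_3^2\,Q(A)+o(h^2)$ and integration in $x_3$ over $I$ yields exactly $\tfrac{h^2}{24}Q(A)$; the patching across $\Sigma$ through the strip $\Sigma_{r_h}$ contributes at most $\e_h\cdot\|W(\D_h y^h)\|_{L^\infty}\cdot|\Sigma_{r_h}\times I| \le C\e_h r_h/h^2 \cdot h^2 = o(h^2)$ by~\eqref{parameter-3}, since on the strip $\D_h y^h$ stays in a fixed compact set (so $W$ is bounded there by (H1)) — possibly one inserts an explicit $r_h$-scale interpolation between the two smooth pieces, whose gradient is bounded because the traces and tangential derivatives of $y$ agree across $\Sigma$ up to the curvature discontinuity, still giving a bounded integrand times a set of measure $\lesssim r_h$, hence energy $\lesssim\e_h r_h = o(h^2)$. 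Dividing by $h^2$ and letting $h\to 0$ gives $\lim \tfrac1{h^2}E^h(y^h) = \tfrac1{24}\int_{S\setminus\Sigma}Q(A) = E_K(y)$, and a diagonal argument over the density parameter $\delta$ completes the proof.

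\medskip
\noindent\textbf{Main obstacle.} The crux is the continuity-across-$\Sigma$ part of the compactness statement: one must show the weakened strip is not so soft that the sheet tears, i.e.\ quantitatively estimate the trace jump of $y^h$ on $\Sigma$ by the rigidity defect on $\Sigma_{r_h}$, and this is exactly where the interplay of all three hypotheses~\eqref{parameter-1}--\eqref{parameter-3} is delicate — \eqref{parameter-2} keeps the tube width $\gtrsim h$ so that the mesoscopic rigidity grid sees it, \eqref{parameter-1} bounds the defect $h^2/\e_h$, and one needs the combination to beat the trace constant $\sim r_h^{-1/2}$ of the thin tube. Getting the geometry of the tubular neighbourhood of the Lipschitz arc $\Sigma$ right (so that Poincaré/trace constants are uniform in $h$) is the technical heart.
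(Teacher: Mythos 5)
Your three-step skeleton matches the paper's, and your lower-bound step is essentially the paper's argument (apply \cite{FrJaMu02} on $S_i\setminus\Sigma_R$, discard the nonnegative strip contribution, exhaust in $R$). But the two places you flag as the hard points are handled quite differently in the paper, and in both cases your proposed route has a real gap.

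First, the ``no tearing'' part of compactness. You propose trace/Poincar\'e estimates on tubular neighbourhoods of width $\sim r_h$, rigidity applied on the weakened strip, and an interplay of \eqref{parameter-1} and \eqref{parameter-2} to ``beat the trace constant $\sim r_h^{-1/2}$''; you never close this estimate, and \eqref{parameter-2} is in fact not used for compactness at all. The paper's argument is a one-liner: by \eqref{parameter-1}, $\int_{\Sigma_{r}\times I} W(\D_h y^h)\le Ch^2/\e_h\le C$, so together with the $O(h^2)$ bound off the strip and (H3) one gets $\int_{\Omega}\dist^2_{SO(3)}(\D_h y^h)\le C$ over the \emph{whole} of $\Omega$. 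Hence $\D_h y^h$ is bounded in $L^2(\Omega)$, $y^h$ converges weakly in $W^{1,2}(\Omega)$ to a limit independent of $x_3$, and that limit is a single Sobolev function on the connected domain $S$ --- continuity across $\Sigma$ is automatic, with no tube geometry, no rigidity on the strip, and no separate normalization-and-reconnection of the two sides (which, as you set it up, would itself require proving the two rigid motions are compatible).

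Second, and more seriously, your recovery sequence begins by reducing to smooth data via ``density of isometries smooth up to $\Sigma$ on each side.'' No such density result is available for folding isometries; the paper's own introduction treats it as conditional (``\emph{if} the density of smooth folding isometries can be guaranteed similar to \cite{Horn11b}\dots''), and the naive construction you sketch --- mollify on each $S_i$ and patch --- destroys the pointwise isometry constraint, without which the Taylor expansion of $W$ does not produce $Q(A)$. The paper avoids density entirely: it uses the Lipschitz truncation from the proof of \cite[Theorem 6.1(ii)]{FrJaMu02}, replacing $u_i,n_i$ by $u_i^h,n_i^h$ with $\|(\D')^2u_i^h\|_\infty+\|\D' n_i^h\|_\infty\le 1/h$ and agreement off a set $M^h$ with $|M^h|/h^2\to 0$, and then glues the two Kirchhoff ans\"atze with a cutoff $\eta^h$ satisfying $|\D'\eta^h|\le C/r$. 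This is also where \eqref{parameter-2} actually enters: one needs $|u_1^h-u_2^h|\le Cr$ on $\Sigma_r$, which follows from $|u_1-u_2|\le Cr$ (continuity of $y$ across $\Sigma$ plus Lipschitz extensions) \emph{and} from $|u_i^h-u_i|\le C\rho_h\le Ch\le Cr$, the last inequality being \eqref{parameter-2}. Your remark that the interpolation gradient is bounded ``because the tangential derivatives of $y$ agree across $\Sigma$'' is also off: the gradient of $y$ may jump across $\Sigma$ (that is the point of folding); boundedness comes from $|u_1^h-u_2^h|\,|\D'\eta^h|\le Cr\cdot C/r\le C$. With the correct gluing, the strip energy is $\le C\e_h r_h/h^2\to0$ by \eqref{parameter-3}, as you state.
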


\begin{remarks}
(i) The bound \eqref{parameter-1} ensures that the damaged material
is not too soft. This is used in the proof of the
compactness result, Part \ref{gamma:compact} of Theorem \ref{gamma},
as it rules out discontinuities of the asymptotic deformation across $\Sigma$.
\\
The bound \eqref{parameter-2} requires the damaged part of the material
to be wide enough with respect to the thickness of the sheet, 
while \eqref{parameter-3} asserts that the damaged portion of the material
should be soft enough to ensure that the fold does not contribute to the asymptotic
energy. Conditions \eqref{parameter-2} and \eqref{parameter-3}
are used in Part \ref{gamma:upper} of Theorem \ref{gamma}, as they
exclude excessive strain. \\
(ii) Observe that \eqref{parameter-1} through \eqref{parameter-3} are met, for instance,
if $\e_h\sim h^2$ and $r_h\sim h$.
\end{remarks}

Theorem \ref{gamma} is a consequence of Proposition \ref{lower&compact}
and of Proposition \ref{proposition:upper} below. Both of them
rely on arguments and results in \cite{FrJaMu02}.

\subsection{Compactness and lower bound}

\begin{proposition}\label{lower&compact}
Let $\e_h$, $r_h\downarrow 0$ as $h\downarrow 0$ and assume 
that \eqref{parameter-1} is satisfied.
If $y^h\in W^{1,2}(S, \R^3)$ satisfy
$$
\limsup_{h\to 0}
\frac{1}{h^2}E^h(y^h) < \infty,
$$
then there exists a map $y\in \AA(S, \Sigma)$
such that, after taking subsequences, $y^h\weak y$ weakly in
$W^{1,2}(\Omega, \R^3)$ and
locally strongly in $W^{1, 2}((S\setminus\Sigma)\times I, \R^3)$
as $h\downarrow 0$.
Moreover,
$$
E_K(y; S_i) \leq\liminf_{h\to 0}\frac{1}{h^2} E^h(y^h; S_i\times I)
\mbox{ for }i = 1, 2.
$$
\end{proposition}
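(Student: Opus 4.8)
The plan is to reduce the statement, as much as possible, to the corresponding compactness and lower-bound results for the plain Kirchhoff plate functional established in \cite{FrJaMu02}, applied separately on the two subdomains $S_1$ and $S_2$, and then to use the hypothesis \eqref{parameter-1} to glue the two limit maps into a single $W^{1,2}(S;\R^3)$ deformation. First I would fix $i\in\{1,2\}$ and discard the weakening factor by a crude lower bound: since $f^h\ge\e_h$ everywhere and $f^h=1$ on $S_i\setminus\Sigma_{r_h}$, one has $E^h(y^h;(S_i\setminus\Sigma_{r_h})\times I)\le E^h(y^h;S_i\times I)$, so on the shrinking "good" set $S_i^h:=S_i\setminus\Sigma_{r_h}$ the energy density is genuinely $W(\D_h y^h)$ with no prefactor. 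Since $r_h\downarrow 0$, the sets $S_i^h$ exhaust $S_i$, and on any fixed Lipschitz subdomain $U\Subset S_i$ we eventually have $U\subset S_i^h$, so $\limsup_h \frac1{h^2}\int_{U\times I}W(\D_h y^h)<\infty$. Applying the geometric rigidity / compactness argument of \cite{FrJaMu02} on $U$ (after the usual normalization subtracting rigid motions, which can be fixed consistently on all of $S_i$ by an exhaustion and diagonal argument) yields a subsequence and a limit $y|_{S_i}\in W^{2,2}(S_i;\R^3)$ with $(\D y)^\transp\D y=I$ a.e.\ in $S_i$, with $y^h\weak y$ weakly in $W^{1,2}(\Omega)$ and locally strongly in $W^{1,2}((S_i\setminus\Sigma)\times I)$; moreover the $\Gamma$-liminf inequality of \cite{FrJaMu02} on each $U$ plus monotone exhaustion over $U\uparrow S_i$ gives $E_K(y;S_i)\le\liminf_h\frac1{h^2}E^h(y^h;S_i\times I)$.

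The remaining and genuinely new point is the global regularity $y\in W^{1,2}(S;\R^3)$, i.e.\ that the two one-sided limits match along $\Sigma$ in the sense that their trivial juxtaposition is a single Sobolev map with no jump across $\Sigma$. This is where hypothesis \eqref{parameter-1}, $\limsup_h h^2/\e_h<\infty$, enters. Here I would \emph{not} throw away the weak factor on the bad strip: on $\Sigma_{r_h}\times I$ the density is $\e_h W(\D_h y^h)$, so the energy bound gives $\int_{\Sigma_{r_h}\times I}W(\D_h y^h)\le C\,h^2/\e_h\le C'$, a bound \emph{uniform} in $h$ even though it is not $o(1)$. Combined with the coercivity (H3), $\dist^2_{SO(3)}(\D_h y^h)\le C W(\D_h y^h)$, this yields $\|\D_h y^h\|_{L^2(\Sigma_{r_h}\times I)}\le C$; in particular $\|\D' y^h\|_{L^2(\Sigma_{r_h}\times I)}$ is bounded uniformly. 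Since $y^h\weak y$ in $W^{1,2}(\Omega)$, the full gradient $\D y$ (understood on all of $S$) is the weak limit and has no singular part concentrated on $\Sigma$: more concretely, for a test vector field $\phi\in C_c^\infty(S\times I;\R^3)$ supported near $\Sigma$ one integrates by parts in $y^h$, passes to the limit using weak convergence on the good parts and the uniform $L^2$ gradient bound on $\Sigma_{r_h}\times I$ (whose measure tends to $0$, so its contribution vanishes), and concludes that $\int_S y\,\diver'\phi = -\int_S (\text{weak limit of }\D' y^h)\cdot\phi$ with no boundary term on $\Sigma$. Hence the one-sided $W^{2,2}$ maps $y|_{S_1}$, $y|_{S_2}$ have matching traces on $\Sigma$ and $y\in W^{1,2}(S;\R^3)\cap W^{2,2}(S\setminus\Sigma;\R^3)$ with $(\D y)^\transp\D y=I$ a.e.; that is, $y\in\AA(S,\Sigma)$.

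The main obstacle I anticipate is precisely this gluing step: making rigorous that the uniform-but-not-small energy on the vanishing strip $\Sigma_{r_h}\times I$ forbids a jump of $y$ across $\Sigma$. The subtlety is that a uniform $L^2$ bound on $\D' y^h$ over a set of vanishing measure does not by itself prevent a jump — one needs to quantify how a would-be jump of size $\delta$ across $\Sigma$ forces the gradient on $\Sigma_{r_h}$ to be of order $\delta/r_h$ in the normal direction, hence contributes $\gtrsim \delta^2 r_h^{-2}\cdot|\Sigma_{r_h}|\sim \delta^2/r_h\to\infty$ to $\int_{\Sigma_{r_h}}|\D' y^h|^2$ if $\delta\not\to 0$, contradicting the uniform bound $C h^2/\e_h\cdot(\text{coercivity constant})$. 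Executing this cleanly requires a one-dimensional slicing/trace argument transverse to $\Sigma$ (using that $\Sigma$ is locally a Lipschitz graph, as noted after \eqref{defB}), combined with the strong $W^{1,2}$ convergence on the good parts to identify the one-sided traces as the limits from $S_1$ and $S_2$. Everything else — the rigidity estimate, the lower bound, the exhaustion of $S_i$ by compactly contained Lipschitz subdomains, and the handling of rigid-motion normalizations — is a direct adaptation of \cite{FrJaMu02} and should not present real difficulty.
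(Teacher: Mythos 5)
Your proposal is essentially correct and, for the local compactness and liminf parts, coincides with the paper's argument: the paper also applies the rigidity/compactness and lower-bound theorems of \cite{FrJaMu02} on Lipschitz subdomains $S_i^R=S_i\setminus\Sigma_R$ exhausting $S_i$, uses $|\D^2y|=|A|$ to get an $R$-independent $W^{2,2}$ bound, and passes $R\to 0$. The one place where you diverge is the step you flag as the main obstacle, the matching of traces across $\Sigma$, and here you make the argument considerably harder than it needs to be. The paper's order of operations dissolves the issue: first combine the bound off the strip with the strip bound $\int_{\Sigma_{r_h}\times I}W(\D_h y^h)\,\dv{x}\leq Ch^2/\e_h\leq C$ (this is exactly where \eqref{parameter-1} enters) to obtain, via (H3), a uniform bound on $\D_h y^h$ in $L^2$ of \emph{all} of $\Omega$; this yields a single weak $W^{1,2}(\Omega)$ limit $y$ with $\d_3y=0$, i.e.\ $y\in W^{1,2}(S;\R^3)$, before any subdomain is ever considered. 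A function in $W^{1,2}(S)$ automatically has matching one-sided traces on the interior Lipschitz curve $\Sigma$, so there is nothing left to glue, and no slicing or integration-by-parts argument quantifying a would-be jump is required; your second paragraph in fact already contains this observation, making the third redundant. Relatedly, you should avoid normalizing by rigid motions separately on each subdomain $U\Subset S_i$: once global weak $W^{1,2}(\Omega)$ convergence is in hand, the FJM theorems upgrade it to local strong convergence on the good sets without any per-subdomain renormalization, which sidesteps the consistency issue your diagonal argument is meant to address.
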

\begin{proof}
We omit the index $h$ in $\e_h$ and $r_h$;
the letter $C$ denotes constants that do not depend on $h$.
By the definition of $f^h$ in \eqref{defB} we have
$$
\int_{\Omega\setminus (\Sigma_r\times I)}W(\D_h y^h) \dv{x} \leq Ch^2\leq C.
$$
On the other hand, by~\eqref{parameter-1},
\begin{align*}
\e\int_{\Sigma_r\times I} W(\D_h y^h) \dv{x} =
\int_{\Sigma_r\times I} f^h W(\D_h y^h) \dv{x} \leq Ch^2 \leq C\e. 
\end{align*}
From hypothesis (H3) on $W$ we deduce that
$$
\int_{\Omega} \dist^2_{SO(3)}(\D_h y^h) \dv{x} \leq C \int_{\Omega} W(\D_h y^h) \dv{x} \leq C.
$$
Hence $(\D_hy^h)$ is uniformly bounded in $L^2(\Omega)$ due to
the hypotheses on $W$.
This implies that there exists $y\in W^{1,2}(S)$ such that $y^h\weak y$ weakly
in $W^{1,2}(\Omega)$, after taking subsequences. Indeed,
we first notice that $(\D y^h)$ is uniformly bounded in $L^2(\Omega)$
and therefore, after taking a subsequence (not relabelled),
we see that there is some $y\in W^{1,2}(\Omega)$ such that
$y^h\weak y$ weakly in $W^{1,2}(\Omega)$. Then we note that
$\|\d_3 y^h\|_{L^2(\Omega)}\leq Ch\to 0$ implies that $\d_3 y = 0$.
Hence $y$ does not depend on $x_3$ and therefore we can
identify it with a map (denoted by the same symbol) in $W^{1,2}(S)$.
\\
Since $\int_{(S\setminus\Sigma_r)\times I)} W(\D_h y^h) \dv{x} \leq Ch^2$
and $r\to 0$, by monotonicity of the integral we conclude that
for any ($h$-independent) $R > 0$ we have
$$
\int_{(S\setminus\Sigma_R)\times I} W(\D_h y^h) \dv{x} \leq Ch^2
$$
for all small enough $h$. The constant $C$ does not depend on $R$.
\\
Now fix a small $R > 0$ and define $S_i^R = S_i\setminus\Sigma_R$ for $i = 1$, $2$;
both are Lipschitz domains.
We can apply \cite[Theorems 4.1 and 6.1 (i)]{FrJaMu02} on each $S_i^R$.
Hence $y^h\to y$ strongly in $W^{1,2}(S_i^R\times I)$
and $y\in\AA(S, \Sigma_R)$ with
\begin{equation}
\label{lowerb-1}
\frac{1}{24}\int_{S_i^R} Q(A) \dv{x'} \leq\liminf_{h\to 0}h^{-2}\int_{S_i^R\times I}
W(\D_h y^h) \dv{x} \leq C.
\end{equation}
Here $A$ is the second fundamental form of $y$ on 
$S\setminus\Sigma_R = S_1^R\cup S_2^R$.
Since $y$ is an isometric immersion, by \cite[Proposition 6]{FJM2} we have
$$
|\D^2 y| = |A|\mbox{ almost everywhere on }S\setminus\Sigma_R.
$$
Hence \eqref{lowerb-1} implies that
\begin{equation}
\label{lowerb-1-monthslater}
\|\D^2 y\|_{L^2(S_1^R)} + \|\D^2 y\|_{L^2(S_2^R)}\leq C.
\end{equation}
This is true for all $R$ and the constant $C$ does not depend on $R$.
As noted earlier, the area of $\Sigma_R$ converges to $0$ as $R\to 0$.
Hence $y\in W^{2,2}(S\setminus\Sigma)$
and
$$
\int_{S_i} Q(A) \dv{x'} = \limsup_{R\to 0}\int_{S_i^R} Q(A) \dv{x'} < \infty.
$$
Summarising, we have $y^h\to y$
locally strongly in $W^{1,2}((S\setminus\Sigma)\times I)$
and $y\in\AA(S, \Sigma)$.
\\
According to \eqref{lowerb-1}, for every small $R > 0$ we have
\begin{align*}
E_K(y; S_i^R)&\leq\liminf_{h\to 0}h^{-2}E^h(y^h; S_i^R\times I)
\\
&\leq\liminf_{h\to 0}h^{-2}E^h(y^h; S_i\times I).
\end{align*}
The right-hand side does not depend on $R$. Taking the supremum
over all small $R > 0$, we therefore see that
$$
E_K(y; S_i) \leq \liminf_{h\to 0}h^{-2}E^h(y^h; S_i\times I).
$$
\end{proof}

%

\subsection{Recovery sequence}

In the proof of Proposition \ref{proposition:upper} below
we will use the following lemma.

\begin{lemma}
\label{uplem}
Let $U\subset\R^2$ be a bounded Lipschitz domain. Then there exists a constant
$\delta > 0$,
depending only on the Lipschitz constant of $U$, such that
the following is true: if $M\subset U$ satisfies $|M| < \delta(\diam U)^2$
and if we set
$$
R = \sqrt{\frac{2|M|}{\delta}},
$$
then $B_R(x)$ intersects $U\setminus M$ for each $x\in U$.
\end{lemma}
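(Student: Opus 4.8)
The plan is to reduce everything to the \emph{lower measure density} enjoyed by Lipschitz domains. Since $|M\cap B_R(x)|\le|M|$, the assertion $B_R(x)\cap(U\setminus M)\neq\emptyset$ is implied by the single estimate $|U\cap B_R(x)|>|M|$, and I would establish this for every $x\in U$. (We may assume $|M|>0$, so that $R>0$; the case $|M|=0$ is degenerate.) Thus the whole proof hinges on a suitable lower bound for $\rho\mapsto|U\cap B_\rho(x)|$ with a constant controlled by the geometry of $U$.

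First I would invoke the uniform interior cone condition of bounded Lipschitz domains: there are an aperture $\eta>0$, depending only on the Lipschitz constant, and a height $\rho_0\in(0,\diam U]$ such that for every $x\in\overline U$ some open circular cone with vertex $x$, half-angle $\eta$ and height $\rho_0$ lies in $U$. Truncating this cone at radius $\rho\le\rho_0$ yields
\[
|U\cap B_\rho(x)|\ \ge\ \delta_0\,\rho^2\qquad\text{for all }x\in\overline U,\ 0<\rho\le\rho_0 ,
\]
with $\delta_0=\delta_0(\eta)>0$. I would then push this to all scales up to the diameter using monotonicity of $\rho\mapsto|U\cap B_\rho(x)|$: for $\rho_0\le\rho\le\diam U$,
\[
|U\cap B_\rho(x)|\ \ge\ |U\cap B_{\rho_0}(x)|\ \ge\ \delta_0\rho_0^2\ \ge\ \delta_0\Big(\frac{\rho_0}{\diam U}\Big)^{2}\rho^2 .
\]
Setting $\delta:=\delta_0(\rho_0/\diam U)^2$ (so $\delta\le\delta_0$ because $\rho_0\le\diam U$), the two regimes combine to
\[
|U\cap B_\rho(x)|\ \ge\ \delta\,\rho^2\qquad\text{for all }x\in\overline U,\ 0<\rho\le\diam U ,
\]
and, taking $\rho=\diam U$ and using $U\subset\overline{B}_{\diam U}(x)$ for $x\in U$, also $|U|\ge\delta(\diam U)^2$.

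With this $\delta$ fixed I would finish by a short dichotomy. Let $M\subset U$ with $0<|M|<\delta(\diam U)^2$, put $R=\sqrt{2|M|/\delta}$, and fix $x\in U$. If $R\le\diam U$, the density estimate at scale $\rho=R$ gives $|U\cap B_R(x)|\ge\delta R^2=2|M|>|M|$. If $R>\diam U$, then every point of $U$ lies within distance $\diam U<R$ of $x$, so $U\subset B_R(x)$ and $|U\cap B_R(x)|=|U|\ge\delta(\diam U)^2>|M|$. In either case
\[
|(U\setminus M)\cap B_R(x)|\ \ge\ |U\cap B_R(x)|-|M|\ >\ 0 ,
\]
so $B_R(x)$ meets $U\setminus M$; since $x\in U$ was arbitrary, the lemma follows.

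The only real content is the measure-density estimate; the point that needs care is the nature of the constant. The aperture $\eta$ of the interior cone --- hence the small-scale constant $\delta_0$ --- is governed by the Lipschitz constant, but the height $\rho_0$ up to which the cone condition holds is a genuine length scale of $U$, so the final $\delta=\delta_0(\rho_0/\diam U)^2$ is most accurately described as depending on the Lipschitz character of $U$. Everything else is the monotonicity of $\rho\mapsto|U\cap B_\rho(x)|$ and the elementary bound $|A\setminus B|\ge|A|-|B|$.
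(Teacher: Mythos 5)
Your proof is correct and follows essentially the same route as the paper: both arguments rest on the lower measure--density property of Lipschitz domains, $|U\cap B_\rho(x)|\ge\delta\rho^2$, combined with the observation that $\delta R^2=2|M|>|M|$ forces $B_R(x)\cap U\not\subset M$. The paper simply cites this density estimate (Giaquinta, Ch.~III, Def.~1.3 and the following remark) and runs the inclusion argument by contradiction, whereas you derive the estimate from the uniform interior cone condition and argue directly via $|(U\setminus M)\cap B_R(x)|\ge|U\cap B_R(x)|-|M|$; these are cosmetic differences. One point where you are genuinely more careful: the density estimate is only available for $\rho<\diam U$, while the hypothesis $|M|<\delta(\diam U)^2$ only gives $R<\sqrt{2}\,\diam U$, so the paper's single application of the estimate at $\rho=R$ tacitly skips the regime $R\ge\diam U$; your dichotomy (using $U\subset B_R(x)$ and $|U|\ge\delta(\diam U)^2>|M|$ in that case) closes this small gap. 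Your closing remark that the constant really depends on the Lipschitz \emph{character} of $U$ (through the cone height $\rho_0$ relative to $\diam U$), not merely the Lipschitz constant, is also a fair refinement of the statement, though immaterial for the application to the fixed domain $S$.
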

\begin{proof}
This is standard; we include the proof 
for convenience.
By Definition 1.3 in \cite[Chapter III]{Giaquinta} and the remark 
following it, there exists a constant $\delta > 0$, depending only
on the Lipschitz constant of $U$, such that
$|B_{\rho}(z)\cap U|\geq \delta\rho^2$ whenever $z\in U$ and $\rho < \diam U$.
\\
If $B_R(x)$ did not intersect
$U\setminus M$, then $B_R(x)\cap U\subset M$ and thus we would have
$$
2|M| = \delta R^2 \leq |B_R(x)\cap U| \leq |M|,
$$
a contradiction.
\end{proof}

The main result of this section is the following proposition.

\begin{proposition}\label{proposition:upper}
Let $\e_h$, $r_h\in (0, \infty)$ be null sequences
satisfying \eqref{parameter-2} and \eqref{parameter-3}
and let $y\in\AA(S, \Sigma)$.
Then there exist $y^h\in W^{1,2}(\Omega, \R^3)$ such that
$y^h\weak y$ weakly in $W^{1,2}(\Omega, \R^3)$ and
\begin{equation}
\label{gammalimsup-energies}
\lim_{h\downarrow 0}E^h(y^h) = E_K(y).
\end{equation}
\end{proposition}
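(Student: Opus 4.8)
The plan is to follow the classical recovery-sequence construction of Friesecke--James--M\"uller \cite{FrJaMu02}, carried out separately on the two Lipschitz pieces $S_1$ and $S_2$, and then to glue the two constructions across $\Sigma$ in a thin transition layer whose width is tied to the parameters $r_h$ and $\e_h$. First I would reduce to a dense class of deformations: by a density argument (extending the approximation results for $W^{2,2}$ isometric immersions on each $S_i$, cf.~the smoothing techniques of \cite{Horn11b} and the diagonalization used in \cite{FrJaMu02}) it suffices to construct the recovery sequence for $y\in\AA(S,\Sigma)$ that is, on each $S_i$, a $C^2$ (or $W^{2,\infty}$) isometric immersion, and then pass to the general case by a diagonal sequence since $E_K$ is continuous along such approximations. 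For such a $y$, on each piece $S_i$ one has the standard ansatz
\[
y^h(x',x_3) = y(x') + h x_3\, b(x') + \tfrac{h^2}{2} x_3^2\, d(x'),
\]
where $b = \p_1 y \times \p_2 y$ is the unit normal and $d$ is the optimal "Cosserat" vector realizing the relaxation $Q(A)$ in the definition \eqref{defEK}; a short Taylor expansion using (H1)--(H3) gives $h^{-2} E^h(y^h; S_i^R\times I)\to \frac{1}{24}\int_{S_i^R} Q(A)\,\dv{x'}$ on any interior subdomain $S_i^R = S_i\setminus\Sigma_R$, exactly as in \cite{FrJaMu02}.

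The new issue is the behaviour near $\Sigma$, and here the key step is the interpolation across the crease. Since $y\in W^{1,\infty}(S;\R^3)$ is globally Lipschitz (Remarks~(i) after \eqref{defEK}) but its gradient may jump across $\Sigma$, one cannot simply take the above ansatz with the two different normals $b_1,b_2$ on the two sides. Instead, in the tube $\Sigma_{r_h}$ I would define $y^h$ by a Lipschitz interpolation that matches $y$ itself (which is continuous across $\Sigma$) and interpolates the normal/Cosserat data between the two sides over a layer of width comparable to $r_h$. Using $|\Sigma_{r_h}| = O(r_h)$ (which goes to $0$ by the Lipschitz-graph property of $\Sigma$) together with the fact that on $\Sigma_{r_h}$ the material factor is $f^h = \e_h$, the elastic contribution of the transition layer is controlled by
\[
\e_h \int_{\Sigma_{r_h}\times I} W(\D_h y^h)\,\dv{x} \;\le\; C\,\e_h\, r_h\, h^{-2}\cdot h^2 \;\Big/\;\text{(layer width)}^2 ,
\]
and, choosing the interpolation to live on a scale $\sim r_h$, the gradient blow-up factor is $O(r_h^{-2})$, so the layer energy is $O(\e_h r_h)$ once divided by $h^2$; by \eqref{parameter-3} this is $o(1)$. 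The role of \eqref{parameter-2}, $h = O(r_h)$, is to guarantee that the interpolation can be done in a layer thick enough relative to $h$ that the through-thickness correction terms (which live on scale $h$) do not themselves create an $O(1)$ strain, so that the $W^{1,2}(\Omega)$ bound and the isometry defect both stay under control; this is precisely the "exclusion of excessive strain" mentioned in Remarks~(i). Lemma~\ref{uplem} enters here to ensure that, after removing the small damaged set, every point of each $S_i$ still sees uncorrupted material within a radius $R = \sqrt{2|M|/\delta} \sim \sqrt{r_h}$, which is what lets us localize the estimates and match data cleanly.

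Finally I would assemble the pieces: set $y^h$ equal to the FJM ansatz on $S_i\setminus\Sigma_{r_h}$ and to the interpolation on $\Sigma_{r_h}$, check that $y^h\in W^{1,2}(\Omega;\R^3)$ with $\|y^h - y\| \to 0$ and $\|\D_h y^h\|_{L^2(\Omega)}$ bounded so that $y^h\wto y$ weakly in $W^{1,2}(\Omega)$, and add up the energy contributions: $\frac1{h^2}E^h(y^h; (S_1\cup S_2)\setminus\Sigma_{r_h}) \to \frac1{24}\int_{S\setminus\Sigma} Q(A)\,\dv{x'} = E_K(y)$ by dominated convergence (using \eqref{lowerb-1-monthslater}-type integrability of $A$), while $\frac1{h^2}E^h(y^h;\Sigma_{r_h}\times I) = O(\e_h r_h/h^2)\to 0$. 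Together with the trivial $\limsup \le$ bound this yields the limit \eqref{gammalimsup-energies}, and the general case follows by the diagonal argument from the dense class. I expect the main obstacle to be the explicit construction and estimation of the interpolation layer: one must simultaneously (a) keep $y^h$ close to $y$ in $L^2$, (b) keep the full scaled gradient $\D_h y^h$ bounded in $L^2(\Omega)$ uniformly in $h$ despite a normal that rotates by an $O(1)$ angle across a layer of width $\sim r_h$, and (c) make the resulting energy $o(h^2)$; balancing these three against the three hypotheses \eqref{parameter-1}--\eqref{parameter-3} is the delicate point, and it is here that the precise scaling $r_h \gtrsim h$, $\e_h r_h \ll h^2$ is used.
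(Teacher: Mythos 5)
Your overall architecture coincides with the paper's: the Friesecke--James--M\"uller ansatz on each $S_i$, a transition layer of width $\sim r_h$ across $\Sigma$ in which the two constructions are joined by a cutoff with $|\D'\eta^h|\le C/r_h$, the bulk converging to $E_K(y)$ by Taylor expansion and dominated convergence, and the tube contributing $O(\e_h r_h/h^2)\to 0$ by \eqref{parameter-3}. However, two of your steps do not hold up as written. First, the reduction to smooth (or $W^{2,\infty}$) folding isometries by density is both unnecessary and unjustified: you would need to approximate $y\in\AA(S,\Sigma)$ by maps that are smooth isometries on each $S_i$ \emph{and still agree continuously along $\Sigma$}, and the available density results for $W^{2,2}$ isometries give no control on this matching across the crease; the paper itself treats the density of smooth folding isometries as an unresolved hypothesis elsewhere. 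The actual proof avoids this entirely via the FJM truncation: after a Stein extension of $u_i$ and $n_i$, one truncates to obtain $u_i^h,n_i^h$ with second derivatives bounded by $1/h$ outside an exceptional set $M^h$ with $|M^h|=o(h^2)$. Lemma~\ref{uplem} is then applied to $M^h$ --- not to the tube --- yielding $\rho_h=\sqrt{2|M^h|/\delta}=o(h)$ and hence $|u_i^h-u_i|\le C\rho_h\le Ch\le Cr_h$; your reading $R\sim\sqrt{r_h}$ misidentifies the set to which the lemma is applied.

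Second, your displayed layer estimate is internally inconsistent: if the energy density in the tube really scaled like $(\text{layer width})^{-2}=r_h^{-2}$, the tube contribution after dividing by $h^2$ would be $\e_h\, r_h\, r_h^{-2}h^{-2}=\e_h/(r_h h^2)$, which does \emph{not} tend to zero under \eqref{parameter-3} (take $\e_h=h^2$, $r_h=h$). The mechanism that saves the construction --- which you correctly flag as the delicate point but do not carry out --- is that $\D_h y^h$ stays bounded in $L^\infty$ on $\Sigma_{r_h}\times I$: the interpolation term is $|u_1^h-u_2^h|\,|\D'\eta^h|\le Cr_h\cdot C/r_h=C$, because $y$ is continuous across $\Sigma$ and Lipschitz so that $|u_1-u_2|\le Cr_h$ on $\Sigma_{r_h}$ (plus $|u_i^h-u_i|\le C\rho_h\le Cr_h$ from the truncation), while the normal term $h|n_1^h-n_2^h|\,|\D'\eta^h|\le Ch/r_h$ is bounded precisely by \eqref{parameter-2}. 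Only then is $W(\D_h y^h)$ uniformly bounded on the tube and the scaled tube energy at most $C\e_h r_h/h^2\to0$. With these two repairs your plan reproduces the paper's proof.
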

\begin{proof}
As before $S_{1,2}$ denote the connected components of $S\setminus\Sigma$.
Denote the restriction of $y$ to $S_i$ by $u_i$ and
denote by $n_i$ the normal to $u_i$.
In this proof we will write $\e$ instead of $\e_h$
and $r$ instead of $r_h$. The letter $C$ denotes constants that do
not depend on $h$ as $h\downarrow 0$.
\\
Let $U\subset\R^2$ be an open ball containing the closure of $S$.
As each $S_i$ is a Lipschitz domain,
by \cite{Stein} we can extend each $u_i$ and each $n_i$ to maps
\begin{equation}
\label{extended}
\begin{split}
u_i &\in W^{2,2}(\R^2, \R^3)\cap W^{1,\infty}(\R^2, \R^3)
\\
n_i &\in W^{1,2}(\R^2, \R^3)\cap L^{\infty}(\R^2, \R^3)
\end{split}
\end{equation}
supported in $U$.
Notice that we use the same symbols to denote the extended maps as for the
original ones. The norms of the extended maps
can be bounded by those of the original ones, up to a factor that only
depends on $U$, $S$ and on $\Sigma$.
\\
As in the proof of \cite[Theorem 6.1 (ii)]{FrJaMu02}
we truncate the maps $n_i$ and $u_i$
and thus obtain sequences of maps $n_i^h$ and $u_i^h$ satisfying the bound
\begin{equation}
\label{uppera-1}
\|(\D')^2 u_i^h\|_{L^{\infty}(U)} + \|\D' n_i^h\|_{L^{\infty}(U)}\leq\frac{1}{h},
\end{equation}
while at the same time there is a set $M^h\subset S$ with
\begin{equation}
\label{mhbd}
\limsup_{h\to 0} \frac{|M^h|}{h^2} = 0
\end{equation}
such that
\begin{equation}
\label{notmh}
u_i^h = u_i\mbox{ and }n_i^h = n_i\mbox{ on }S\setminus M^h.
\end{equation}
%
%
Since $|M^h|\to 0$ by \eqref{mhbd}, Lemma \ref{uplem} 
shows that there is a constant $\delta$ depending only
on $S$ and $\Sigma$, such that choosing
$$
\rho_h = \sqrt{\frac{2|M^h|}{\delta}}
$$
we have
\begin{equation}
\label{brho}
B_{\rho_h}(x_0)\cap S\setminus M_h\neq\emptyset\mbox{ for all }x_0\in S.
\end{equation}
By \eqref{mhbd} we have
\begin{equation}
\label{rhobd}
\limsup_{h\to 0}
\frac{\rho_h}{h} = 0.
\end{equation}
We claim that, for $i = 1, 2$, the following $L^{\infty}$ bounds are satisfied
for a constant $C$ depending only on $y$, $S$ and $\Sigma$:
\begin{equation}
\label{uppera-2}
\frac{1}{\rho_h}|u_i^h - u_i| + |\D' u_i^h| + |n_i^h|\leq C \mbox{ almost everywhere on }S.
\end{equation}
In fact, \eqref{extended} implies that
\begin{equation}
\label{extended-1}
|\D' u_i| + |n_i|\leq C \mbox{ almost everywhere on }\R^2.
\end{equation}
Since $u_i^h = u_i$ (hence $\D' u_i^h = \D' u_i$) 
and $n_i^h = n_i$ almost everywhere on $S\setminus M^h$ by \eqref{notmh}, we clearly have
\begin{equation}
\label{uppera-78}
|\D' u^h_i| + |n^h_i|\leq C \mbox{ almost everywhere on }S\setminus M^h.
\end{equation}
On the other hand, \eqref{uppera-1} shows that the Lipschitz
constants of $\D' u_i^h$ and of $n_i^h$ on $U$ are bounded by $1/h$.
By \eqref{brho}, for all $x\in S$ there is a $y\in S\setminus M^h$ such that
$$
|n_i^h(x)| \leq |n_i^h(y)| + |n_i^h(x) - n_i^h(y)|\leq C + \frac{1}{h}|x - y|
\leq C + \frac{\rho_h}{h}.
$$
Here we used \eqref{uppera-78} to estimate $|n_i^h(y)|$.
In view of \eqref{rhobd} this implies the bound on
$|n_i^h|$ asserted in \eqref{uppera-2}. The bound on $|\D' u_i^h|$ is proven
similarly.
\\
In particular, the Lipschitz constants of $u_i^h : S\to\R^3$ are uniformly bounded.
Since $u_i$ is Lipschitz as well, the maps
$u_i^h - u_i$ are Lipschitz on $S$ with uniformly bounded Lipschitz constants 
as $h\downarrow 0$. Since $u_i^h - u_i = 0$ on $S\setminus M^h$,
we deduce from \eqref{brho} that $|u_i^h - u_i|\leq C\rho_h$ on $S$. This concludes the proof of \eqref{uppera-2}.
\\
We will now define the recovery sequence. In order to do so, 
for each $h$ let $\eta^h\in C^{\infty}(S, [0, 1])$
be a smooth cutoff function with $\eta^h = 1$ on
$S_1\setminus\Sigma_r$ and $\eta^h = 0$
on $S_2\setminus\Sigma_r$; we choose it such that
\begin{equation}
\label{etabd}
\|\D'\eta^h\|_{L^{\infty}(\Sigma_r)}\leq \frac{C}{r}.
\end{equation}
Set $\eta_1^h = \eta^h$ and $\eta_2^h = 1 - \eta^h$.
Let $d\in W^{1, \infty}(S, \R^3)$ and define the recovery sequence
\begin{equation}
\label{ladefiniciondey}
y^h(x', x_3) = \sum_{i = 1}^2
\left(u_i^h(x') + hx_3 n_i^h(x') \right)\eta_i^h(x')
+ h^2\frac{x_3^2}{2}d(x').
\end{equation}
For later use we note that by this definition
\begin{align*}
\left|y^h - \sum_{i=1}^2u_i\eta_i^h\right|&\leq
h(|n^h_1| + |n_2^h|) + h^2|d|. 
\end{align*}
Hence, in view of \eqref{uppera-2} we conclude that
there is a constant $C$ depending only on $S$, $\Sigma$ and $y$ such that
\begin{equation}
\label{ladefi-1}
|y^h - y|\leq Ch\left(1 + h|d|\right)\mbox{ on }(S\setminus\Sigma_r)\times I.
\end{equation}
Now we compute
\begin{align*}
\D' y^h(x', x_3) &=\sum_{i = 1}^2 \left(\D' u_i^h(x') + hx_3 \D' n_i^h(x') \right)\eta_i^h(x')
\\
&+ \sum_{i = 1}^2 \left(u_i^h(x') + hx_3 n_i^h(x') \right)\D'\eta_i^h(x') 
+ h^2\frac{x_3^2}{2}\D' d(x').
\end{align*}
Recalling that $\eta_1^h = \eta^h$ and $\eta_2^h = 1 - \eta^h$,
we see that on $\Sigma_r\times I$
\begin{equation}
\label{dybd-1}
\begin{split}
|\D' y^h| &\leq \sum_{i = 1}^2 (|\D' u_i^h| + h|\D' n_i^h|)  \\
& \quad + |u_1^h - u_2^h||\D'\eta^h| + h |n_1^h - n_2^h||\D'\eta^h|
+ h^2|\D' d|
\\
&\leq C\big(
1 
+ \frac{1}{r}|u_1^h - u_2^h| + \frac{h}{r} |n_1^h - n_2^h|
\big).
\end{split}
\end{equation}
We have used the bound \eqref{etabd} as well as \eqref{uppera-2} 
and the estimate \eqref{uppera-1} for $\D' n_i^h$.
Similarly, since
$$
\d_3 y(x', x_3) = h \sum_{i = 1}^2 n_i^h(x')\eta_i^h(x') + h^2 x_3 d(x'),
$$
we can estimate
\begin{equation}
\label{dybd-2}
\frac{1}{h}|\d_3 y^h| 
\leq\sum_{i = 1}^2 |n_i^h\eta_i^h| + h|x_3 d|
\leq C(|n_1^h| + |n_2^h| + h) \leq C,
\end{equation}
in view of \eqref{uppera-2}.
Recalling \eqref{parameter-2},
we deduce from \eqref{dybd-1} and \eqref{dybd-2} that
\begin{equation}
\label{uppera-3}
|\D_h y^h| \leq C\big(1 + \frac{1}{r}|u_1^h - u_2^h| \big)
\mbox{ on }\Sigma_r\times I.
\end{equation}
Here we used \eqref{uppera-2} to estimate $|n_1^h - n_2^h|$
on the right-hand side of \eqref{dybd-1}.
We claim that 
\begin{equation}
\label{uppera-4}
|u_1^h - u_2^h|\leq Cr\mbox{ on }\Sigma_r.
\end{equation}
To prove this, note that \eqref{uppera-2}, \eqref{rhobd} and \eqref{parameter-2}
imply that
$$
|u_i^h - u_i| \leq C\rho_h\leq Ch\leq Cr\mbox{ on }S.
$$
Hence it remains to show that
\begin{equation}
\label{uppera-4b}
|u_1 - u_2|\leq Cr\mbox{ on }\Sigma_r.
\end{equation}
But $u_1 - u_2$ is Lipschitz on $U$ in view of \eqref{extended}.
Moreover it is zero on $\Sigma$ because $y$ is continuous.
Hence \eqref{uppera-4b} follows from the definition of $\Sigma_r$.
This concludes the proof of \eqref{uppera-4}.

By \eqref{uppera-4} and \eqref{uppera-3} we see that
\begin{equation*}
\|\D_h y^h\|_{L^{\infty}(\Sigma_r\times I)} \leq C.
\end{equation*}
Since $W$ is locally bounded, this implies
$$
\|W\big(\D_h y^h\big)\|_{L^{\infty}(\Sigma_r\times I)} \leq C.
$$
Therefore, since $|\Sigma_r|\leq Cr$ due to the regularity of $\Sigma$,
\begin{equation}
\label{ende-A}
\frac{1}{h^2}E^h(y^h, \Sigma_r\times I) =
\frac{\e}{h^2}\int_{\Sigma_r\times I} W(\D_h y^h) \dv{x}
\leq 
\frac{C\e r}{h^2}.
\end{equation}
The right-hand side converges to zero due to \eqref{parameter-3}.

On $S_i\setminus\Sigma_r$ the function $\eta_i^h$ is identically equal to $1$.
Hence
\begin{equation}
\label{Gradiy}
\D_h y^h = 
(\D' u^h_i\ |\ n^h_i) + hx_3(\D' n^h_i\ |\ d) + \frac{h^2x_3^2}{2}(\D' d\ |\ 0)
\mbox{ on }(S_i\setminus\Sigma_r)\times I.
\end{equation}
The map $R = (\D'y\ |\ n)$ clearly takes values in $SO(3)$.
Define
$$
a_i^h =   
x_3 R^\transp(\D' n^h_i\ |\ d) + \frac{hx_3^2}{2} R^\transp(\D' d\ |\ 0).
$$
Then
\begin{equation}
\label{aihb-20}
|a_i^h| \leq C(|\D' n^h_i| + |d| + h|\D' d|)\leq C(1 + |\D' n^h_i|).
\end{equation}
Hence \eqref{uppera-1} ensures that, for small $h$,
\begin{equation}
\label{aihb}
h|a_i^h|\leq C\mbox{ on }S.
\end{equation}
On $S\setminus M^h$ we have
$n_i^h = n_i$, hence
$\D'n_i^h = \D' n_i$ almost everywhere on this set.
Therefore, \eqref{aihb-20} shows that
\begin{equation}
\label{aihb-2}
|a_i^h|\leq C(1 + |\D' n_i|)\mbox{ on }S_i\setminus M^h.
\end{equation}
By the frame indifference of $W$ we have, almost everywhere on 
$(S_i\setminus\Sigma_r)\times I$,
\begin{equation}
\label{fjm1-1}
W(\D_h y^h) = 
W\big(R^\transp\D_h y^h\big) = W\big(R^\transp(\D' u_i^h\, |\, n_i^h) + ha_i^h\big).
\end{equation}
On $S_i\setminus M_h$ we have $(\D' u^h_i\, |\, n^h_i) = R$,
so on $(S_i\setminus\Sigma_r\setminus M^h)\times I$
\begin{align*}
\frac{1}{h^2}W(\D_h y^h) &= 
\frac{1}{h^2}W(I + ha_i^h)
\\
&\leq \frac{C}{h^2}\dist^2_{SO(3)}(I + ha_i^h)
\leq C|a_i^h|^2.
\end{align*}
We have used \eqref{aihb} and the fact 
that the hypotheses on $W$ imply that $W\leq C\dist^2_{SO(3)}$
on bounded subsets of $\R^{3\times 3}$.
Now \eqref{aihb-2} implies the bound
\begin{equation}
\label{aihb-3}
\frac{1}{h^2}\chi_{S_i\setminus M_h} W(I + ha_i^h) \leq C(1 + |\D' n_i|^2).
\end{equation}
The right-hand side is in $L^1(S_i)$ and does not
depend on $h$. On the other hand, by Taylor expansion
and since $|\Sigma_r|\to 0$ and $|M^h|\to 0$
$$
\frac{1}{h^2}\chi_{(S_i\setminus\Sigma_r\setminus M_h)\times I} W(I + ha_i^h)
\to
\frac{1}{2} Q_3\big(x_3 R^\transp (\D' n_i\, |\, d) \big)
$$
pointwise almost everywhere on $S_i\times I$. Combining this
with \eqref{aihb-3} we can apply dominated convergence to conclude
\begin{equation}
\label{ende-B}
\frac{1}{h^2}E^h(y^h, (S_i\setminus\Sigma_r\setminus M_h)\times I)
\to \frac{1}{24}\int_{S_i} 
Q_3(R^\transp (\D' n_i\, |\, d)) \dv{x'}.
\end{equation}
We now claim that
\begin{equation}
\label{nachtrab-1}
\limsup_{h\to 0} \|W\big(\D_hy^h\big)\|_{L^{\infty}\left((S\setminus\Sigma_r)\times I\right)} < \infty.
\end{equation}
In fact, since $W$ is locally bounded, \eqref{nachtrab-1}
will follow once we show that
\begin{equation}
\label{finalmente-b}
\|\D_hy^h\|_{L^{\infty}((S\setminus\Sigma_r)\times I)}\leq C.
\end{equation}
But by \eqref{Gradiy}, on $(S_i\setminus\Sigma_r)\times I$ we have
\begin{align*}
|\D_hy^h|\leq C\left(1 +
|\D'u_i^h| + |n_i^h| + h|\D' n_i^h|\right).
\end{align*}
The last term on the right-hand side is uniformly bounded due to \eqref{uppera-1},
whereas the other two are uniformly bounded due to \eqref{uppera-2}.
This concludes the proof of \eqref{nachtrab-1}.
\\
Using \eqref{nachtrab-1} we see that
\begin{equation}
\label{ende-C}
\frac{1}{h^2}E^h(y^h, (M_h\setminus\Sigma_r)\times I)
= \frac{1}{h^2}\int_{(M^h\setminus\Sigma_r)\times I} W(\D_hy^h) \leq
\frac{C}{h^2} |M_h|.
\end{equation}
By \eqref{mhbd} the right-hand side converges to zero as $h\to 0$.
Summarizing, by combining \eqref{ende-A}, \eqref{ende-B} and \eqref{ende-C} we see that
$$
\frac{1}{h^2}E^h(y^h)
\to \frac{1}{24}\int_S Q_3(R^\transp (\D' n\, |\, d)) \dv{x'},
$$
where $n$ is the normal to $y$.
Relaxing over $d\in L^2$ as in \cite{FrJaMu02}, the convergence
\eqref{gammalimsup-energies} follows.
\\
More precisely, there exist $d_j\in W^{1,\infty}(S, \R^3)$ converging strongly
in $L^2$ and a sequence $h_j\to 0$ such that, defining $y^{h_j}$
as in \eqref{ladefiniciondey} with $d = d_j$,
the convergence \eqref{gammalimsup-energies} is true for $h = h_j$.
Proposition \ref{lower&compact} implies that, after taking a subsequence,
$(y^{h_j})$ converges weakly in $W^{1,2}(\Omega)$ to some 
$\widetilde y\in\AA(S, \Sigma)$.
Since the $d_j$ remain uniformly bounded in $L^2$,
estimate \eqref{ladefi-1} ensures that $\widetilde y = y$.
\end{proof}

\section{Discretization}\label{sec:fem}
We devise in this section a discretization of
the folding problem based on the use of an isoparametric
discontinuous Galerkin finite element method. Corresponding
functions and related discrete quantities are marked by a tilde
sign. 

\subsection{Finite element spaces} 
We follow~\cite{BoNoNt21} and let $\tcT$ be a partition of
the Lipschitz domain $S$ into closed, shape regular elements
$T\in \tcT$ which are images of mappings $F_T: \hT \to T$, where
$\hT$ is a reference triangle or square. The space $\tVV$ of
discontinuous piecewise transformed polynomials of fixed polynomial
degree $k \ge 2$ is defined as 
\[
\tVV = \{\tV \in L^2(S):
\tV \circ F_T \in P_k \cup Q_k \, \mbox{ for all } T\in \tcT\},
\]
where $P_k$ and $Q_k$ denote polynomials of total and partial
degree~$k$ on the respective reference element. We let
$\tcE^{int}$ be the set of interior edges.

The elementwise application of a differential operator is indicated
by a tilde, e.g., for $\tV\in \tVV$ we define
\[
\tnabla \tV|_T = \nabla (\tV|_T)
\]
for all $T\in \tcT$. We use standard notation to denote jumps and 
averages of elementwise smooth functions, e.g.,
\[
\jump{\tV}_e = \tV^+ - \tV^-, \quad
\aver{\tV}_e = (\tV^+ + \tV^-)/2,
\]
for an inner side $e= T^+ \cap T^-$ with a fixed unit normal
$\mu_e$ pointing from $T^+$ into $T^-$ that determines the sign
of $\jump{\tV}$. 

To match the targeted experiments, the boundary conditions imposed in all the numerical simulations provided below 
are pointwise Dirichlet boundary conditions, i.e. we enforce $y(x_i^D) = g_i$, where $x_i$ is a vertex of the 
subdivision on the boundary of $S$ and $g_i$ are given boundary deformations, $i=1,...,n_D$. 
Whence, the jump and average operators do not need to be defined on boundary edges as in the free boundary 
case \cite{BoGuNoYaTh} unlike the clamped boundary case \cite{BoNoNt21}.

\subsection{Curve approximation}
We assume throughout that the folding curve $\Sigma$ is Lipschitz continuous
and piecewise $C^2$ with possible kinks only occurring at vertices of the subdivision.
Moreover, we assume that either a parametric description 
$\Sigma = \{ \gamma(u), u\in [0,1] \}$ or, provided that $\Sigma$ is $C^2$, the 
distance $d_{\Sigma}$ to the curve is available.  We also assume that the
triangulation defines a piecewise smooth curve 
\[
\tSigma = \cup_{j=1}^J e_j
\]
with inner sides $e_j\in \tcE^{int}$, $j=1,\dots,J$, such that the endpoints 
of the segments $e_j$ belong to $\Sigma$. This implies that there exists 
a bijection $\tM: \tSigma \to \Sigma$ such that the distance between the two curves is 
small in the sense that
\begin{equation}\label{e:M}
\|\tM - \id  \|_{W^{1,\infty}(\tSigma)} \to 0
\end{equation}
as $h \to 0$.

\subsection{Discrete Hessian}
To obtain good consistency properties of the approximate Hessian $\tH(\tV)$
of a function $\tV\in \tVV$ we first note that the distributional Hessian
$\mathrm{D}^2\tV$ is for $\phi \in C^\infty_c(S\setminus \tSigma;\R^{2\times 2})$ on the open
set $S \setminus \tSigma$ given by
\[\begin{split}
\dual{\mathrm{D}^2\tV}{\phi} &= \int_{S\setminus \tSigma} \tV \diver \Diver \phi \dv{x'} \\
&= \int_{S\setminus \tSigma} \tD^2 \tV : \phi \dv{x'} 
+ \sum_{e\in \tcE^{int}\setminus \tSigma} \int_e \jump{\tV} \big(\Diver \phi \cdot \mu_e \big)
-  \jump{\tnabla \tV} \cdot \big(\phi \mu_e\big) \dv{s},
\end{split}\]
where $\Diver$ denotes the application of the standard divergence operator
to the columns of a matrix-valued function. 
We aim at preserving this identity for elementwise polynomial
functions $\tphi$ and represent the contributions on  the interior edges $\tcE^{int}$
by functions defined in the edge patches $\o_e = T^- \cup T^+$.
We follow ideas from~\cite{Pryer14,BoNoNt21} and define the operators 
\[
s_e : L^2(e;\R) \to \tVV^{2\times 2}|_{\o_e}, \quad
r_e : L^2(e;\R^2) \to \tVV^{2\times 2}|_{\o_e}, 
\]
for inner edges $e\in \tcE^{int}$ via
\[\begin{split}
\int_{\o_e} s_e(\hv) : \tphi \dv{x'} &= \int_e \hv \big\{\Diver_h \tphi \cdot \mu_e\big\}  \dv{s}, \\
\int_{\o_e} r_e(\hw) : \tphi \dv{x'} &= \int_e \hw \cdot \big\{\tphi \mu_e\big\} \dv{s},
\end{split}\]
for all $\tphi\in \tVV^{2\times 2}|_{\o_e}$; the functions $s_e(\hv)$ and $r_e(\hw)$ are trivially
extended to $S$. We then define $\tH(\tV)\in \tVV^{2\times 2}$ as
\[
\tH(\tV) = \tD^2 \tV + S_\cE(\tV) - R_\cE(\tnabla \tV),
\]
where 
\[
S_\cE(\tV) =  \sum_{e\in \tcE^{int}} s_e(\jump{\tV}), \quad
R_\cE(\tnabla \tV) = \sum_{e\in \tcE^{int} \setminus \tSigma} r_e(\jump{\tnabla \tV}).
\]
Note that the contributions $S_\cE$ associated with the continuity of $y$ contains the edges on
$\tSigma$ while these are omitted in $R_\cE$ respecting possible discontinuities in  
deformation gradients.

For every $\tphi\in \tVV^{2\times 2} \cap C_c^1(S\setminus \tSigma;\R^{2\times 2})$ we have
the consistency property
\[
\dual{\mathrm{D}^2\tV}{\tphi} = \int_U \tH(\tV) : \tphi \dv{x'}.
\]

In general the intersection 
$\tVV^{2\times 2} \cap C_c^1(S\setminus \tSigma;\R^{2\times 2})$ only
contains constant functions. When the interface $\Sigma$ is exactly captured by the subdivisions, 
i.e. $\tSigma = \Sigma$, then the reconstructed Hessian restricted to any subdomain separated 
by $\Sigma$ weakly converge to the continuous Hessian in $L^2$ \cite{BoGuNoYaTh}. We define 
a discrete seminorms approximating a seminorm of $W^{2,2}(S\setminus \Sigma) \cap W^{1,2}(S)$
for $\tV\in \tVV$ via
\[
\|\tV\|_{\tH^2}^2 = \|D_h^2 \tV\|^2
+ \int_{\tcE^{int}} h_\tcE^{-3} |\jump{\tV}|^2 \dv{s}
+ \int_{\tcE^{int} \setminus \tSigma} h_\tcE^{-1} |\jump{\tnabla \tV}| \dv{s}.
\]
Note that 
the 
identity $\|\tV\|_{\tH^2} =0$ only implies that
$\tV$ is continuous and piecewise affine.
By using standard inequalities we find that the discrete Hessian
defines a bounded operator in the sense that
\[
\|\tH(\tV)\| \le c \|\tV\|_{\tH^2}
\]
for all $\tV\in \tVV$ with a constant $c>0$ that is independent of the
cardinality of $\tcT$.

\subsection{Discrete energy functional}
Our discrete energy functional is defined on a discrete admissible set
that enforces the isometry condition up to a tolerance $\tvrho>0$, i.e.,
we set
\[
\tcA = \Big\{\tZ \in \tVV^3: 
\sum_{T\in \tcT} \Big| \int_T (\tnabla \tZ)^\transp (\tnabla \tZ) - I \dv{x'} \Big| 
\le \tvrho \Big\}.
\]
The discrete functional $\tE_K$ is then obtained by replacing the
Hessian by its discrete approximation which is applied componentwise
and introducing stabilizing and penalty
terms, i.e., for $\g_0,\g_1, \g_2>0$ and $\tY\in \tcA$ we set
\[\begin{split}
\tE_K(\tY) =&  \frac{1}{24} \int_{S\setminus \tSigma} |\tH(\tY)|^2 \dv{x'} \\
& + \frac{\g_0}{2} \int_{\tcE^{int}} h_{\tcE}^{-3} |\jump{\tY}|^2 \dv{s}
+ \frac{\g_1}{2} \int_{\tcE^{int} \setminus \tSigma} h_{\tcE}^{-1} |\jump{\tnabla \tY}|^2 \dv{s} \\
& + \frac{\gamma_2}2 \sum_{i=1}^{n_D}h_i^{-2} \lbrack (\tY-g_i)(x_i^D)\rbrack^2.
\end{split}\]
Assuming an isotropic material we have up to a constant factor $Q(A)=|A|^2$;
we note that the approach applies to more general quadratic forms. 
Note that unlike in previous works, pointwise Dirichlet conditions are considered and enforced
via penalization; $h_i$ denotes a local meshsize around the vertex $x_i^D$.

The energy functional $\tE_K$ is uniformly coercive in $\tH^2$,
i.e., there exists a constant $c$ such that for any choice of parameters $\g_0,\g_1,\g_2>0$ 
we have for all $\tY \in \tVV^3$
$$
\|\tY\|_{\tH^2} \leq c_1 \tE_K(\tY), \qquad \tY \in \tVV^3.
$$
Furthermore, the gradients of deformations in the discrete admissible set $\tcA$ are  uniformly bounded:
$$
\|\tnabla \tY\| 
\le \sqrt{2} \left(\tvrho + |S|\right), \qquad \forall \tY \in \tcA.
$$
We refer to \cite{BoGuNoYaTh} for proofs of the above two inequalities.
Note that these two estimates do not provide a uniform $L^2$ control.
As a consequence, depending on the boundary conditions, the deformations may be defined up 
to certain invariances. If the gradient flow metric controls the $L^2$ norm, then the 
discrete gradient provides unique iterates.

A rigorous justification of the discrete energy functional $\tE_K$ 
can be obtained by establishing its Gamma convergence to $E_K$ as 
the maximal mesh-size~$h$ tends to zero. To prove the stability bound or 
liminf inequality 
one uses the coervity estimate and follows~\cite{BoNoNt21,BoGuNoYaTh} to show by 
using regularizations obtained with
quasi-interpolation operators that for a sequence $(\tY)_{h>0}$
with $\tE_K(\tY)\le c$ there exists a subsequence and a limit
$y\in W^{2,2}(S\setminus \Sigma;\R^3)\cap W^{1,\infty}(S;\R^3)$ such 
that in $L^2(S)$ we have for $h\to 0$
\[
\tY \to y, \quad \tnabla \tY \to \nabla y, \quad \tH(\tY) \wto D^2 y,
\]
provided that elements $T\in \tcT$ satisfy a geometric condition away from the 
discrete interface $\tSigma$. The consistency or limsup inequality
requires the construction of suitable interpolants $\tcI y \in \tcA$ 
of a given folding isometry $y\in W^{2,2}(S\setminus \Sigma;\R^3)\cap W^{1,\infty}(S;\R^3)$
such that 
\[
\tE_K(\tcI y) \to E_K(y)
\]
as $h \to 0$. Crucial here is to show that on curved elements $T\in \tcT$
along the discrete folding arc $\tSigma$ the difference of the local
energy contributions 
\[
\Big| \int_T |\tD^2 \tcI y|^2 \dv{x'}  - \int_{\hT} |D^2 y|^2 \dv{x'} \Big|
\]
with the corrected element $\hT$ hat has a side on the exact interface
can be sufficiently controlled. Corresponding details are in preparation.

\section{Numerical experiments}\label{sec:num_exp}
We report in this section on numerical results obtained with the proposed 
numerical method and the iterative scheme. 

\subsection{Algorithmic aspects}

Except for the presence of folding curves and correspondingly removed
edge contributions in the discontinuous Galerkin method the overall
strategy follows closely the algorithm devised in~\cite{BoGuNoYaComp} 
and later analyzed in~\cite{BoGuNoYaTh}. The efficiency of the discrete
gradient flow~\eqref{e:gradient_flow} for finding stationary configurations
depends strongly on the availability of a good starting value, in particular
on its discrete energy $\tE_K^0$ and the isometry violation $\tvrho$, see~\eqref{e:discrete_admissible_set}.
We note that the boundary conditions are included in a weak, penalized form and, in practice, 
constitute a major contribution of the initial energy when the initial deformation 
is not suitably constructed. To obtain an initial deformation with simultaneously 
moderate discrete bending energy $\tE_K^0$ and small isometry violation $\tvrho$, 
we use the preprocessing procedure described in~\cite{BoGuNoYaComp}. It combines 
the solution of a linear bi-harmonic problem to obtain an approximate discrete
extension $\hY^0 \in \tVV^3$ of the boundary data with a subsequent gradient 
descent applied to the isometry violation error with an iteration until this 
quantity is below a given tolerance, i.e., until the iterate $\hY^L \in \tVV^3$
satisfies
\[
\frac{1}2\int_S | (\tnabla \hY^L)^\transp (\tnabla \hY^L) -I|^2 \dv{x} \le \veps_{pp}.
\]
We then define $\tY^0=\hY^L$ as the starting value for the 
gradient scheme~\eqref{e:gradient_flow}. The gradient flow metric $(\cdot,\cdot)_*$
is obtained as a combination of the bilinear form defined by the discrete 
energy functional and the $L^2$ norm. With this choice we avoid nonuniqueness effects
for certain boundary conditions. As a stopping criterion for this iteration 
we impose the condition that the discrete bending energy is nearly stationary, i.e.,
\[
\big| d_t \tE_K (\tY^M) \big| = \frac{\left| \tE_K(\tY^M)-\tE_K(\tY^{M-1}) \right| }{\tau} 
\le \veps_{stop},
\]
for a given tolerance $\veps_{stop}>0$. The deformation $\tY^M \in \tVV^3$ serves
as our approximation of stationary, low energy configuration for $\tE_K^0$ in the 
admissible set $\tcA$. 
Unless specified otherwise, piecewise polynomials of degree~2 are used for the 
approximation of the deformation, the lifting operators in the construction
of the discrete Hessian, and in approximating the folding curve by edges of 
elements. Our subdivisions are generated with the package \verb|Gmesh|
\cite{geuzaine2009gmsh}, the implementations make use of the \verb|deal.ii| library \cite{bangerth2007}, 
and the visualization are obtained using \verb|Paraview| \cite{squillacote2007paraview}. 
 
\subsection{Bistable flapping device}
Our first set of experiment considers the setting sketched in 
Figure~\ref{fig:model_exp}. The precise parameters defining
the domain $S$ and the arc $\Sigma$ are as follows.

\begin{example}[Parabolic and circular arcs]\label{ex:arc}
For $S = (0,9.6)\times (0,15)$ we consider compressive boundary 
conditions of rate $s \in (0,1)$ imposed at the corners  
\[
x_D = (0,0), \quad x_D' = (9.6,0.0).
\]
Two choices of a folding arc $\Sigma\subset \overline{S}$ are addressed: \\
(a) Let $\Sigma$ be the quadratic curve connecting two boundary points 
$x_{\Sigma,j} \in \p S$, $j=1,2$, and passing through the apex $x_{\Sigma,A}$
given by 
\[
x_{\Sigma,1} = (0,2), \quad x_{\Sigma,2} = (9.6,2), \quad x_{\Sigma,A}=(4.8,6).
\]
(b) Let $\Sigma$ be the circular arc with end-points $x_{\Sigma,j} \in \p S$,
$j=1,2$, and circular midpoint $x_{\Sigma,M} \not \in \Sigma$ given by 
\[
x_{\Sigma,1} = (0,2), \quad x_{\Sigma,2} = (9.6,2), \quad x_{\Sigma,M}=(4.8,-2),
\]
i.e., with radius $r^2=(4.8)^2 + 4^2$. 
\end{example}

A typical triangulation with~556 elements together with an exact resolution 
of the parabolic arc defined in Example~\ref{ex:arc}~(a) 
is shown in Figure~\ref{fig:ex_arc_triang}. Note that the arc is matched exactly
by edges of elements. The simulations are performed for a 
pseudo-time step $\tau=0.01$ and tolerances  $\veps_{stop} = 0.01$, $\veps_{pp} = 1.0$. 
The numerical approximations $\tY^M \in \tVV^3$ obtained with the numerical
scheme for different compression rates imposed in the boundary points $x_D$ and 
$x_D'$ are shown in Figure~\ref{fig:ex_arc_stationary}. We observe a good qualitative
agreement with the real experiment shown in the left part of Figure~\ref{fig:model_exp}
and a continuous dependence of the deformation on the compression rate. 
Only $0,5,10,15$ iterations 
of the gradient descent method for compression rates $s=0\%, 10\%, 20\%,$ and $30\%$ 
were required to meet the prescribed stopping criterion. 

\begin{figure}[htb]
\includegraphics[width=0.5\textwidth,angle=0]{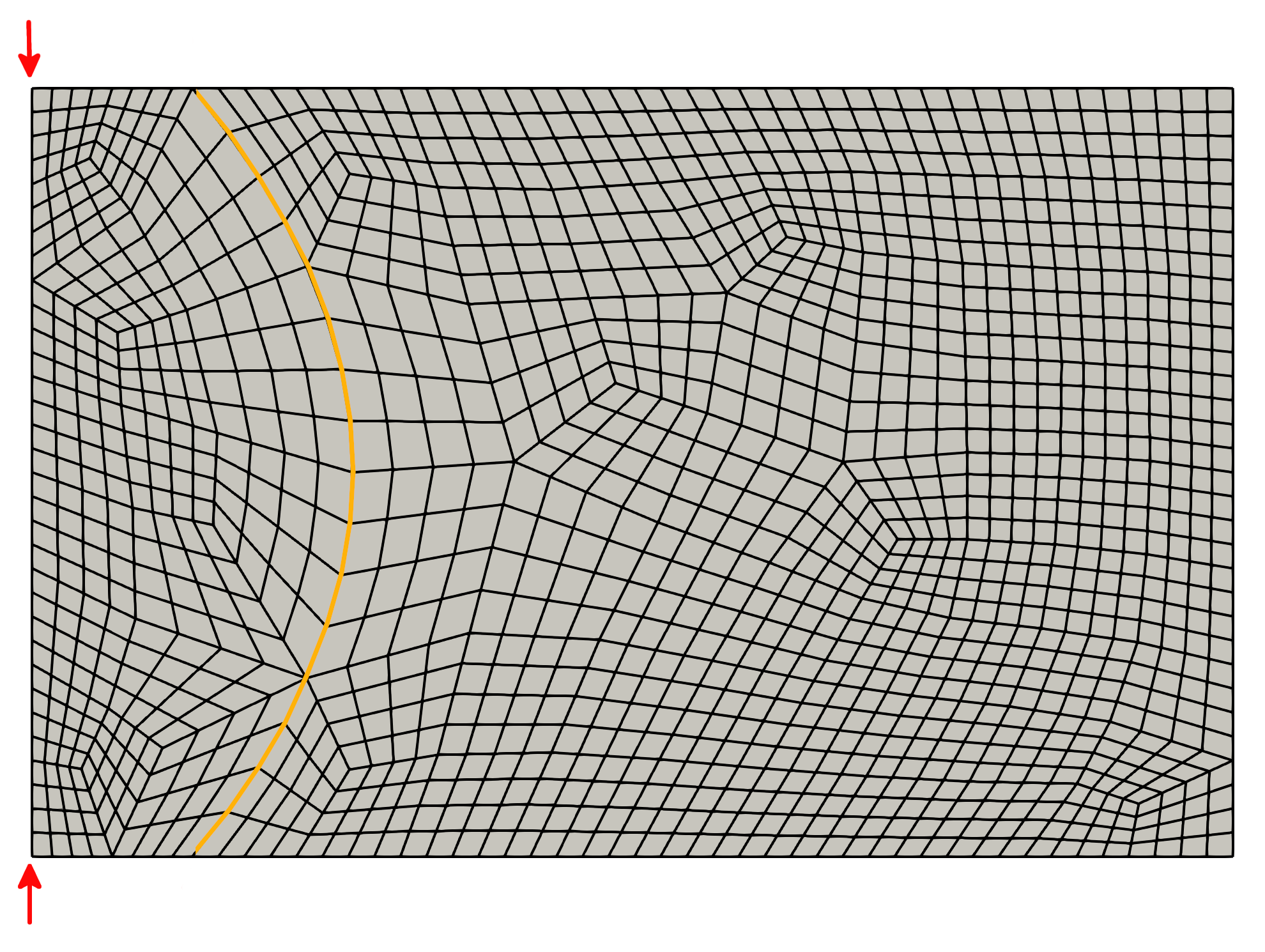}
\caption{\label{fig:ex_arc_triang} Triangulation, folding arc approximation, and compressive
point boundary conditions to generate a bistable flapping mechanism described
by Example~\ref{ex:arc} with a quadratic arc $\Sigma$ that is resolved exactly.}
\end{figure}

\begin{figure}[p]
\includegraphics[width=0.35\textwidth]{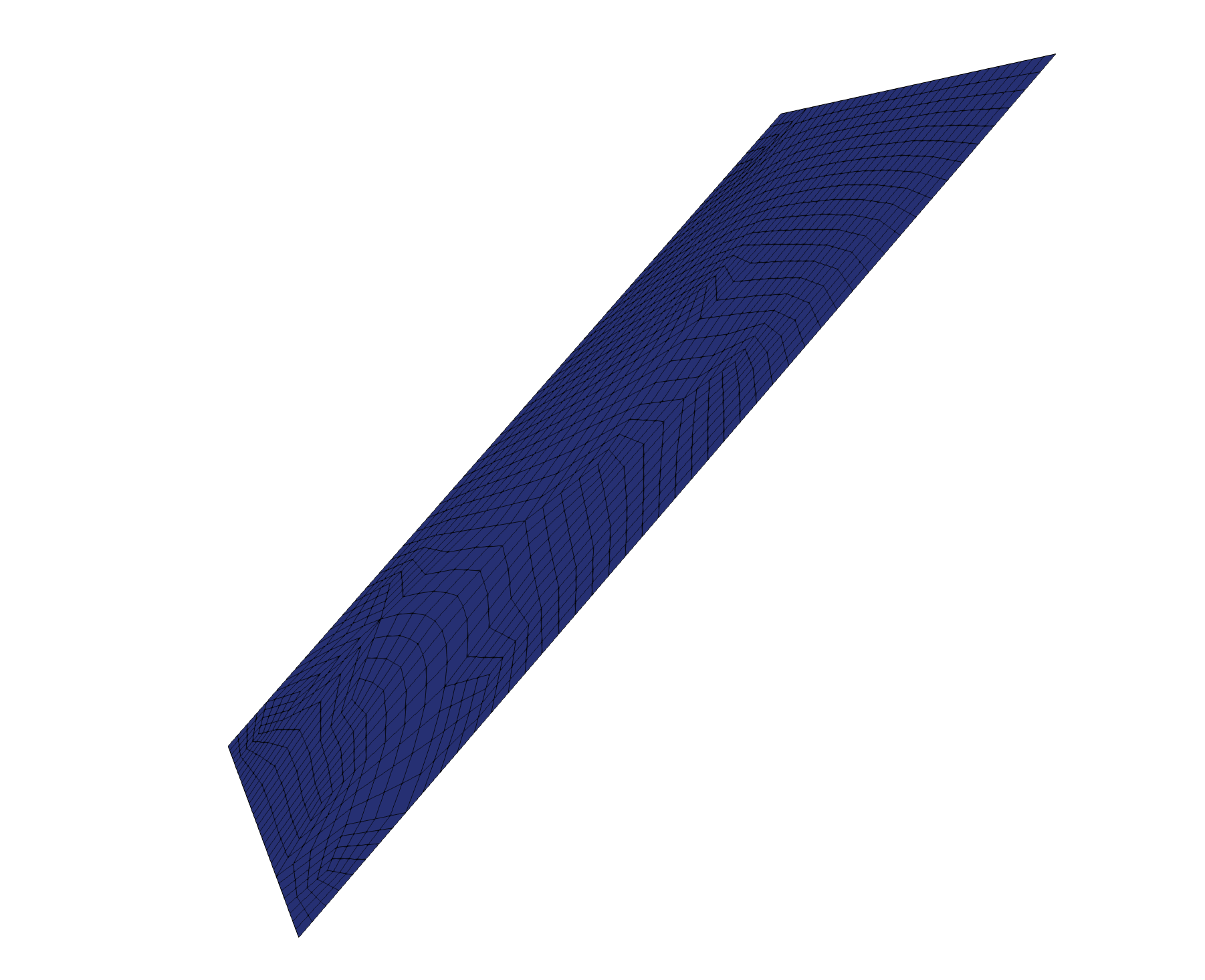} 
\includegraphics[width=0.35\textwidth]{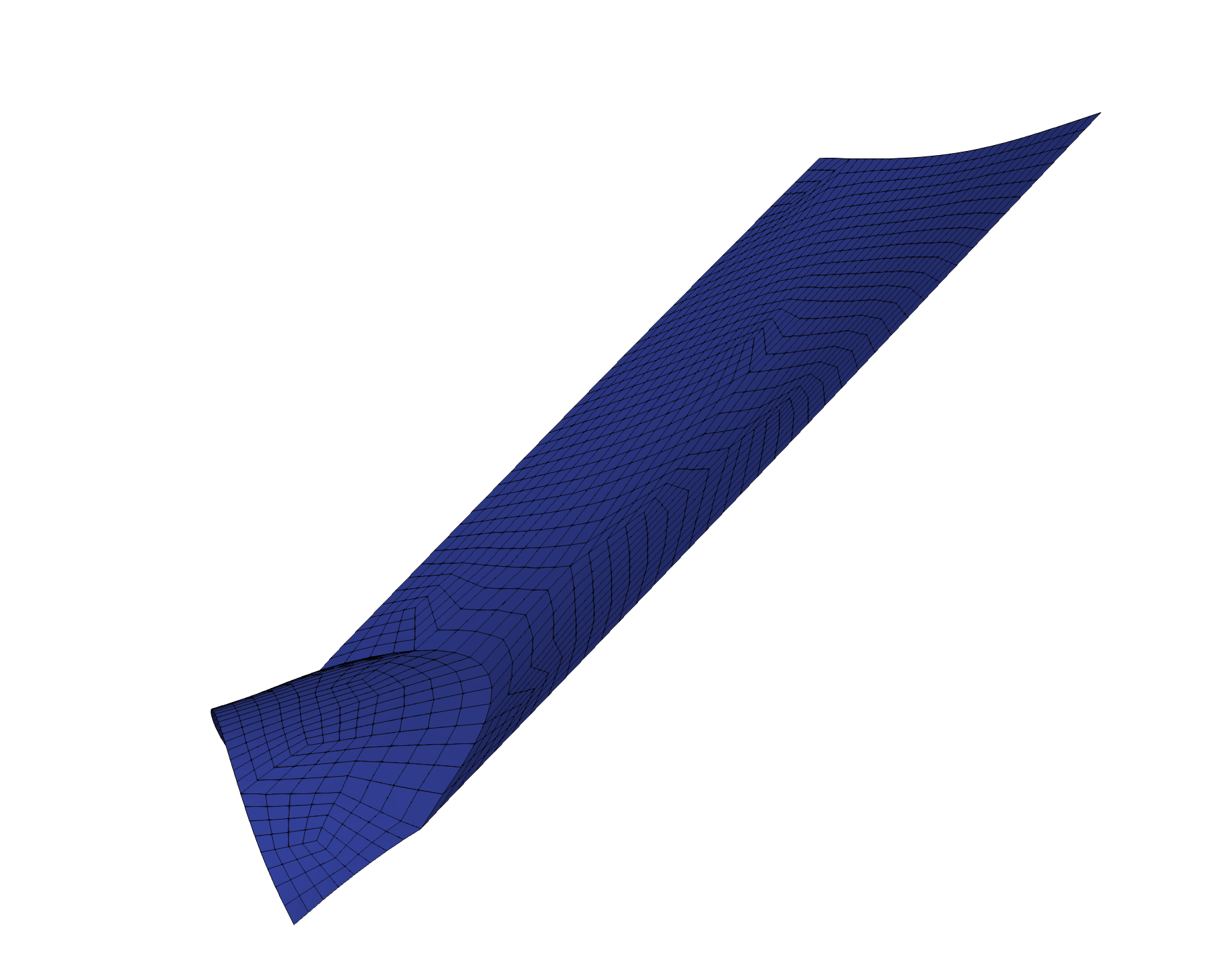} \\
\includegraphics[width=0.35\textwidth]{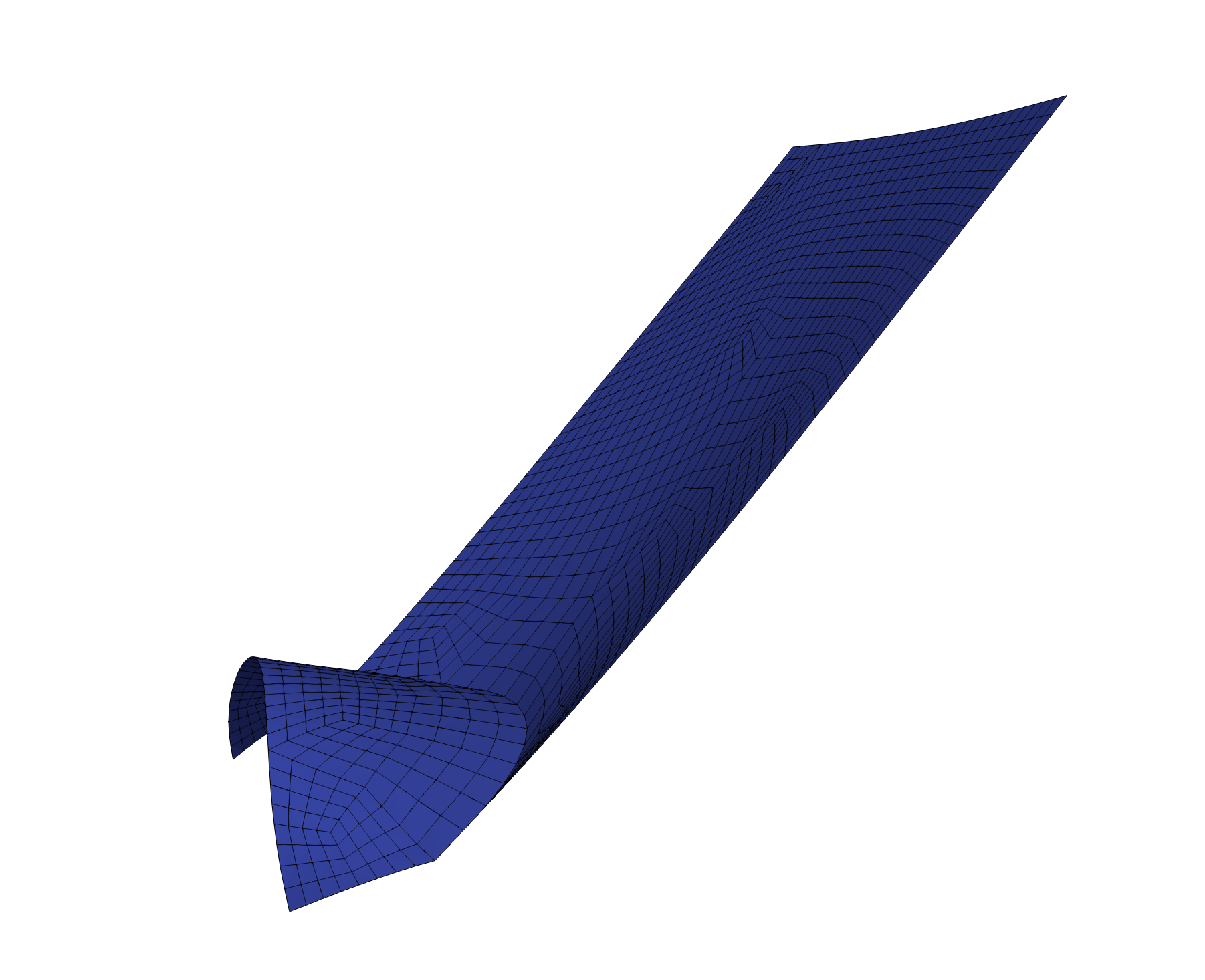}
\includegraphics[width=0.35\textwidth]{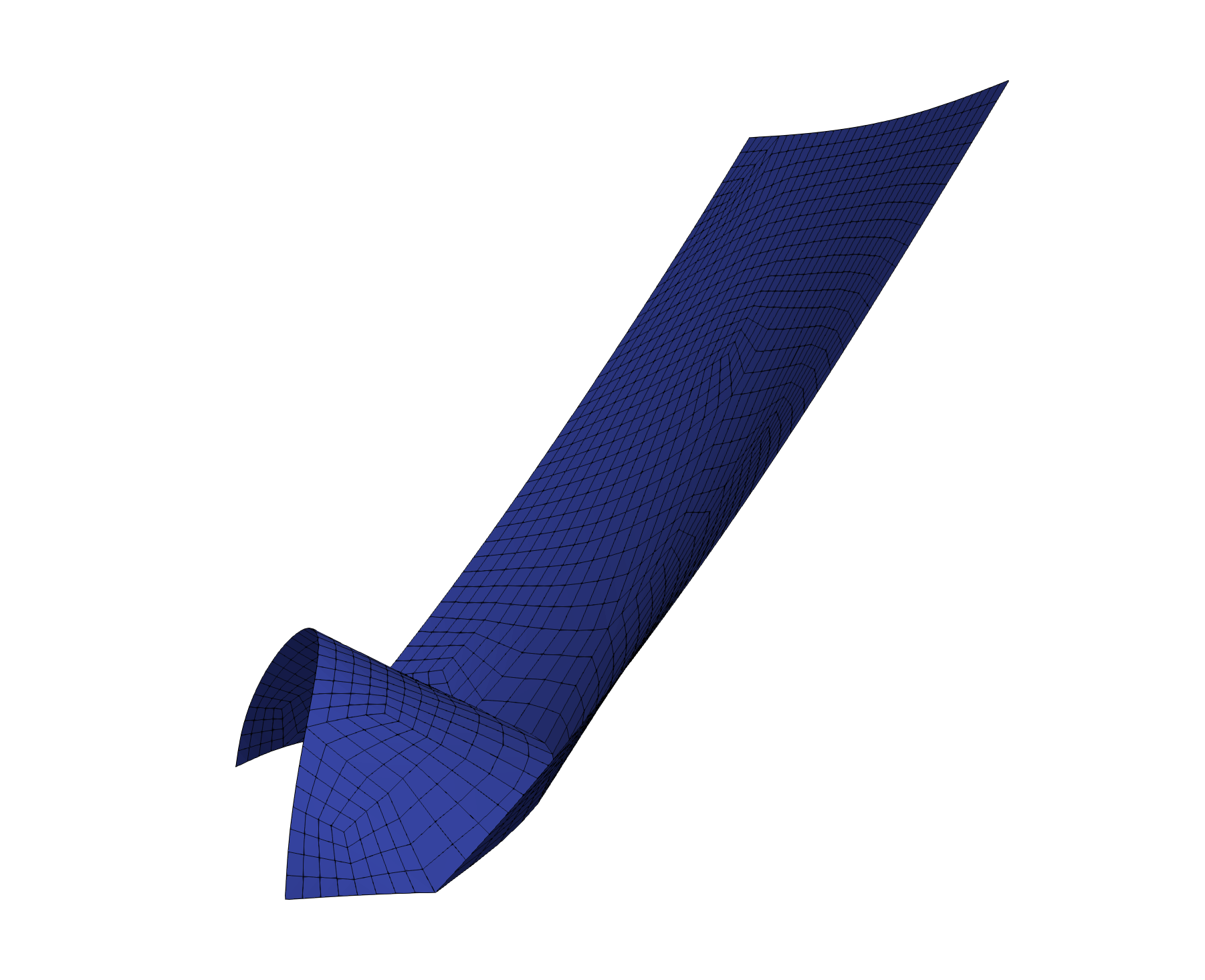}
\caption{\label{fig:ex_arc_stationary}
Nearly stationary configurations $\tY^M \in \tVV^3$ in Example~\ref{ex:arc}
with quadratic folding arc for compression rates $s= 0\%, 10\%$, $20\%$, 
and $30\%$ (left to right, top to bottom).}
\end{figure}

When the folding arc is circular instead of parabolic then our discrete
curves $\Sigma_h$ do not resolve the goemetry exactly. For the setting 
described in Example~\ref{ex:arc}~(b) and a triangulation consisting again of~556 
elements that provide a piecewise quadratic approximation $\Sigma_h$
of $\Sigma$ we obtained for the parameter choices $\tau=0.01$, $\veps_{stop}= 0.1$,
and $\veps_{pp}=1$ the nearly stationary configurations shown in 
Figure~\ref{fig:ex_circle_stationary}. The discrete deformations are similar
to those obtained for the parabolic arc except that the deformed 
right side of the initial rectangular plate is now curved. 

\begin{figure}[p]
\includegraphics[width=0.43\textwidth]{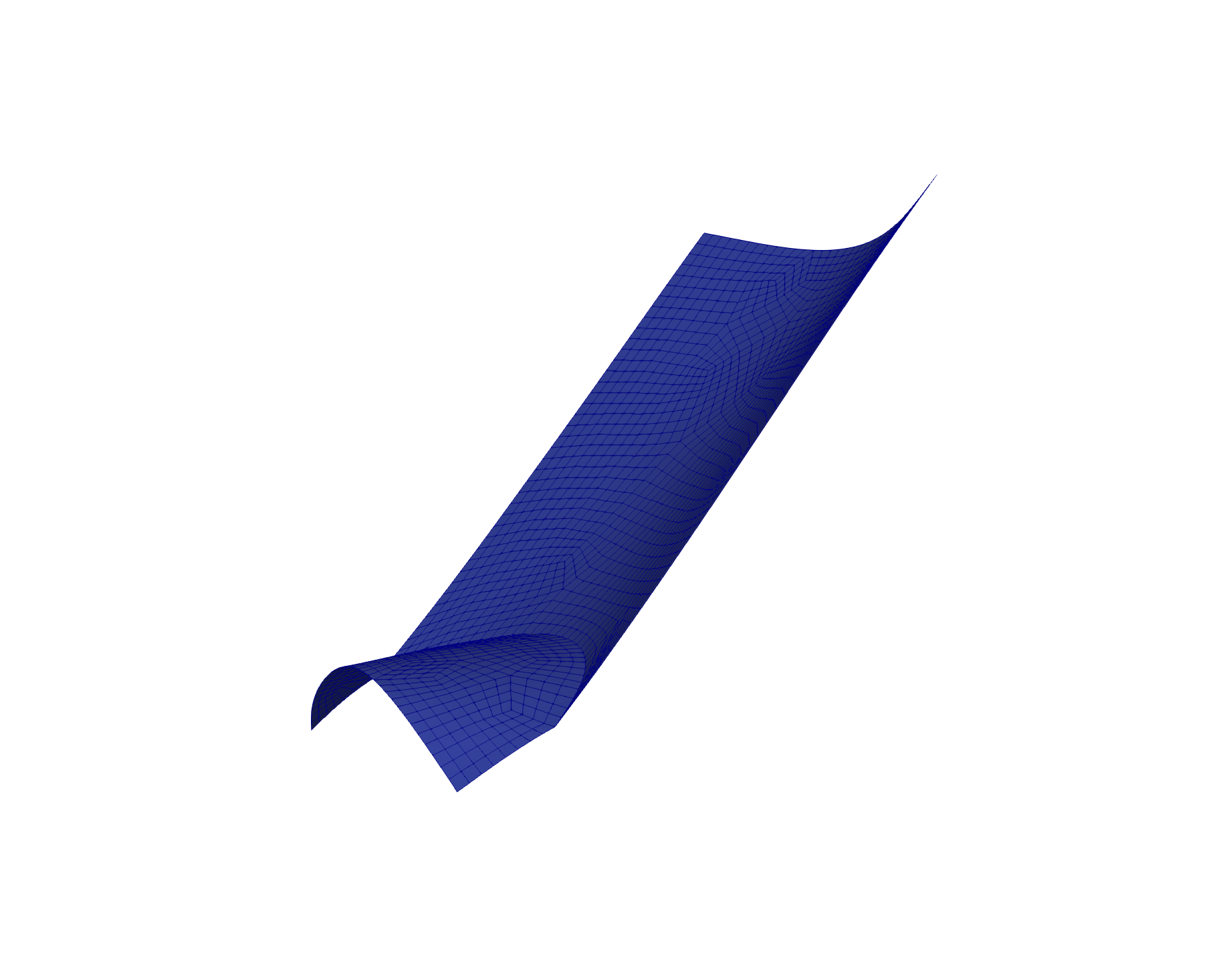} \hspace*{-8mm} 
\includegraphics[width=0.43\textwidth]{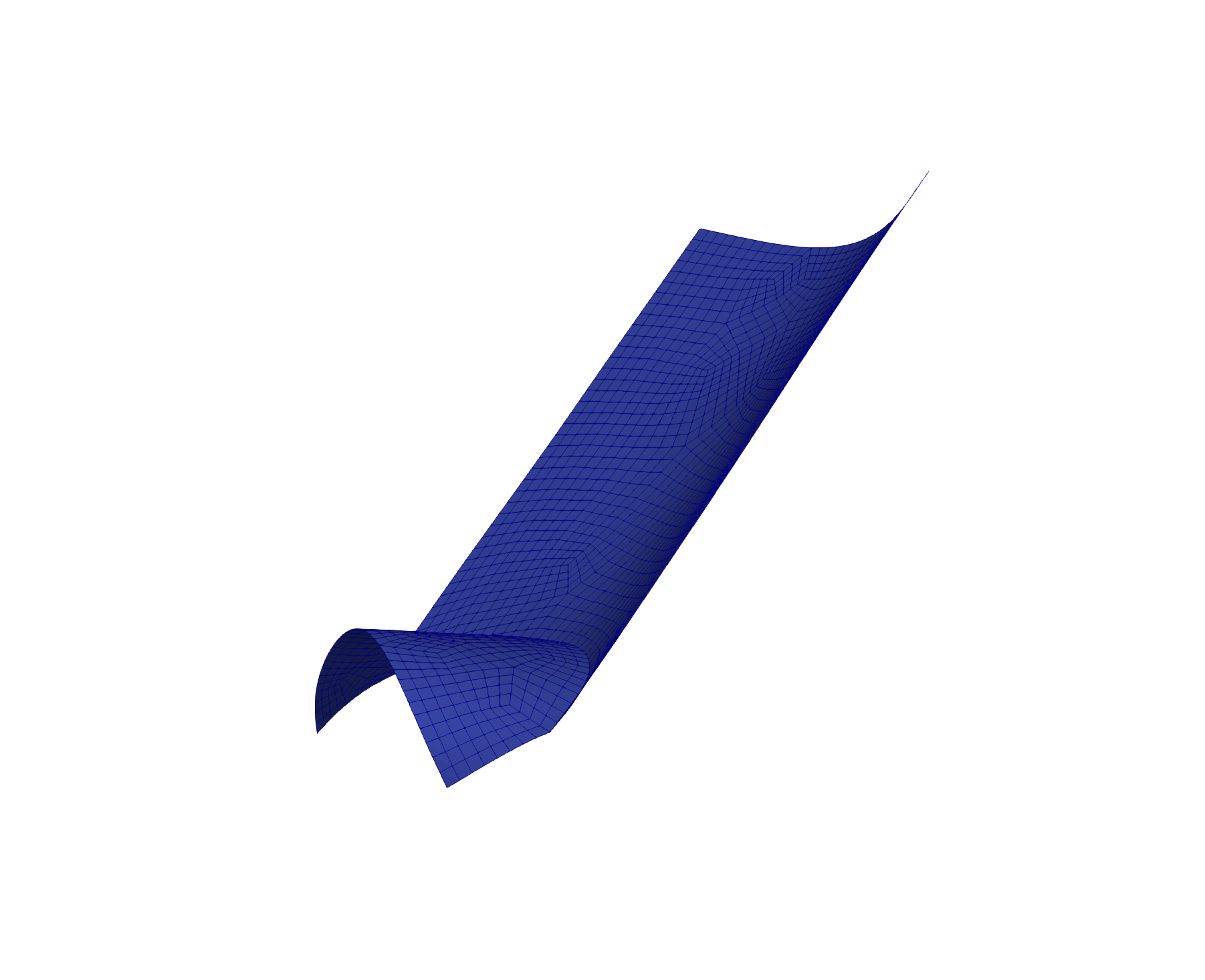} \\[-8mm]
\includegraphics[width=0.43\textwidth]{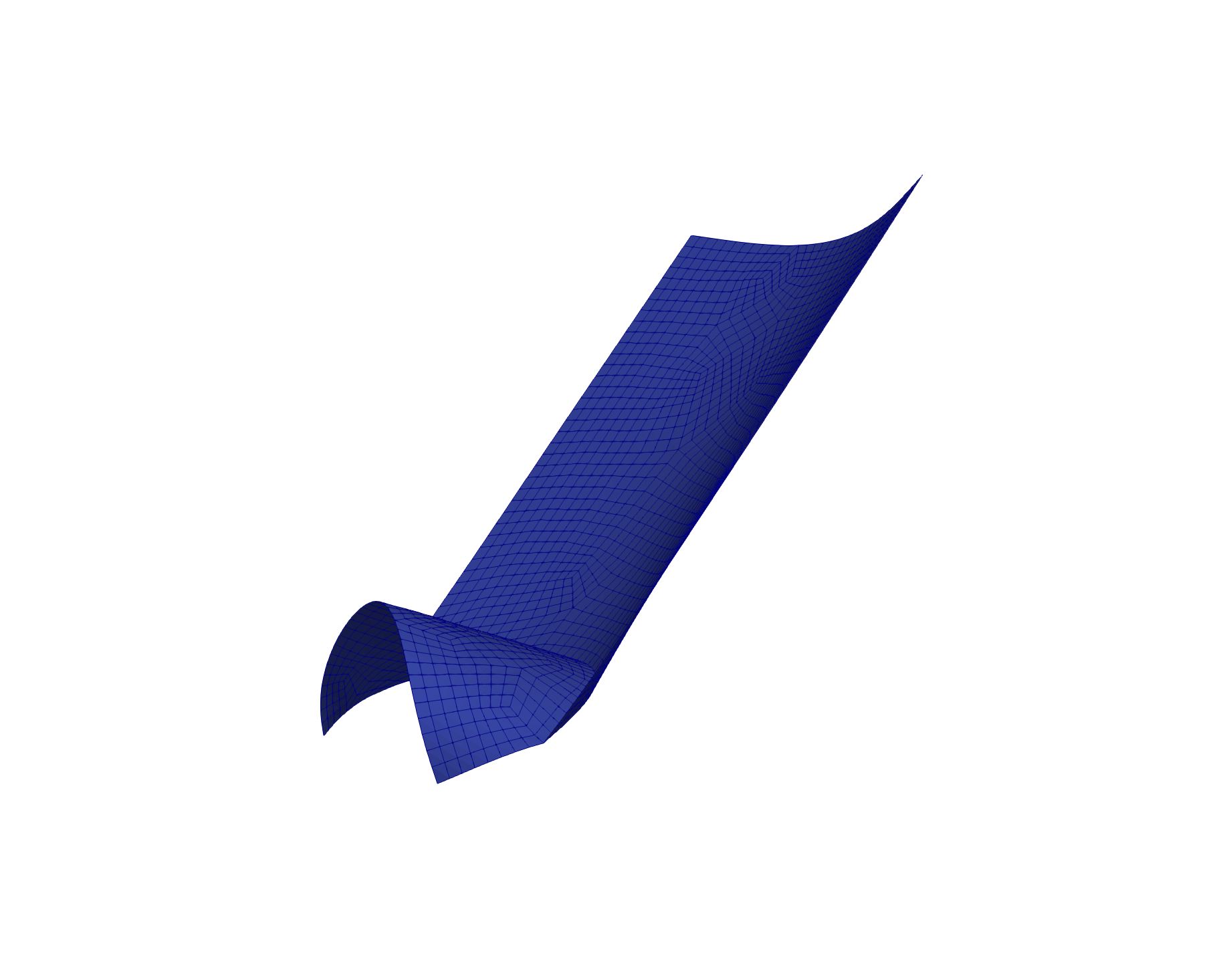} \hspace*{-8mm}
\includegraphics[width=0.43\textwidth]{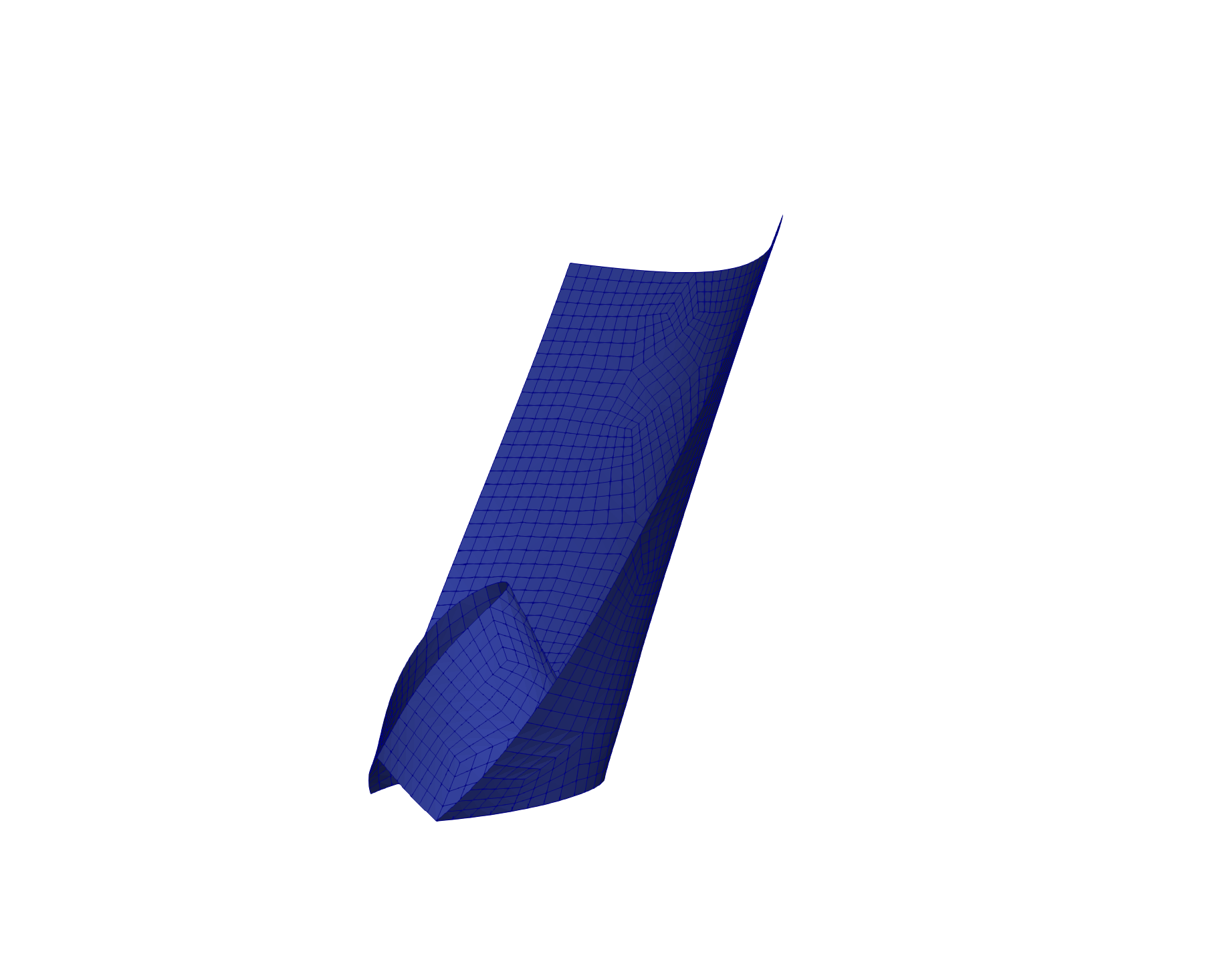}
\caption{\label{fig:ex_circle_stationary} 
Nearly stationary configurations $\tY^M \in \tVV^3$ in Example~\ref{ex:arc}
with circular folding arc for compression rates $s= 10\%$, $20\%$, $30\%$, 
and $100\%$.}
\end{figure}

The effect of approximating the folding arc by a polygonal, piecewise straight
curve is illustrated in Figure~\ref{fig:pw_linear_arc}. The plots display 
the deformations obtained for the circular arc approximated accurately
with piecewise quadratic edges of elements to a coarse approximation using
three straight segments. The Frobenius norm of the Hessian, i.e., an approximation
of the mean curvature of the deformed plates is visualized via a gray scale
coloring. We see that energy concentrations occur at the kinks of the piecewise
linear arc while a more uniform distribution arises for the circular arc with
moderate peaks at the boundary where the compressive boundary condition is
imposed and where the arc ends. Apart from that the overall deformation does
not differ significantly and the main difference is a less curved plate away
from the arc for the coarse, piecewise linear approximation.  

\begin{figure}[htb]
\includegraphics[width=0.43\textwidth]{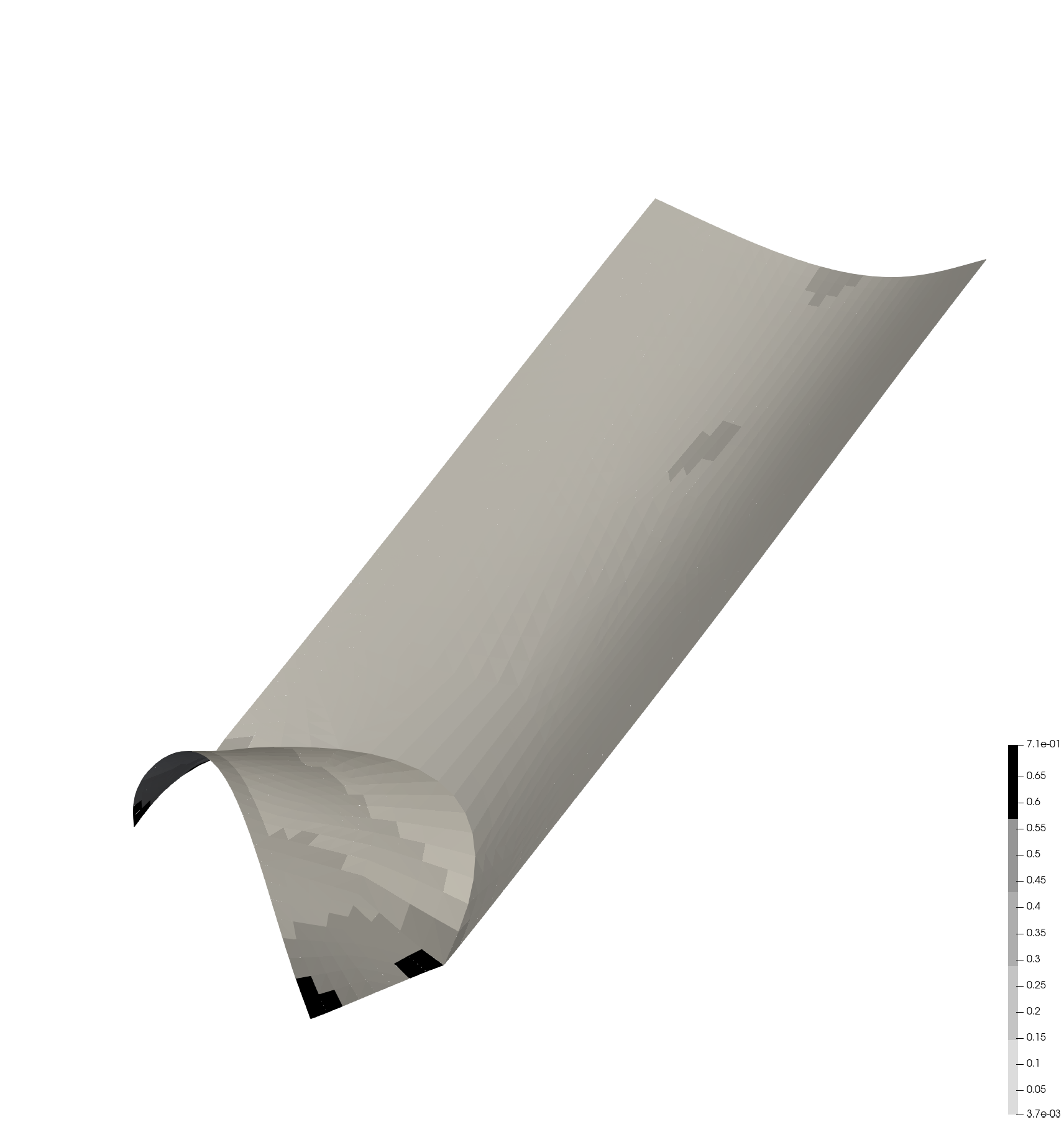} \hspace*{-2mm} 
\includegraphics[width=0.43\textwidth]{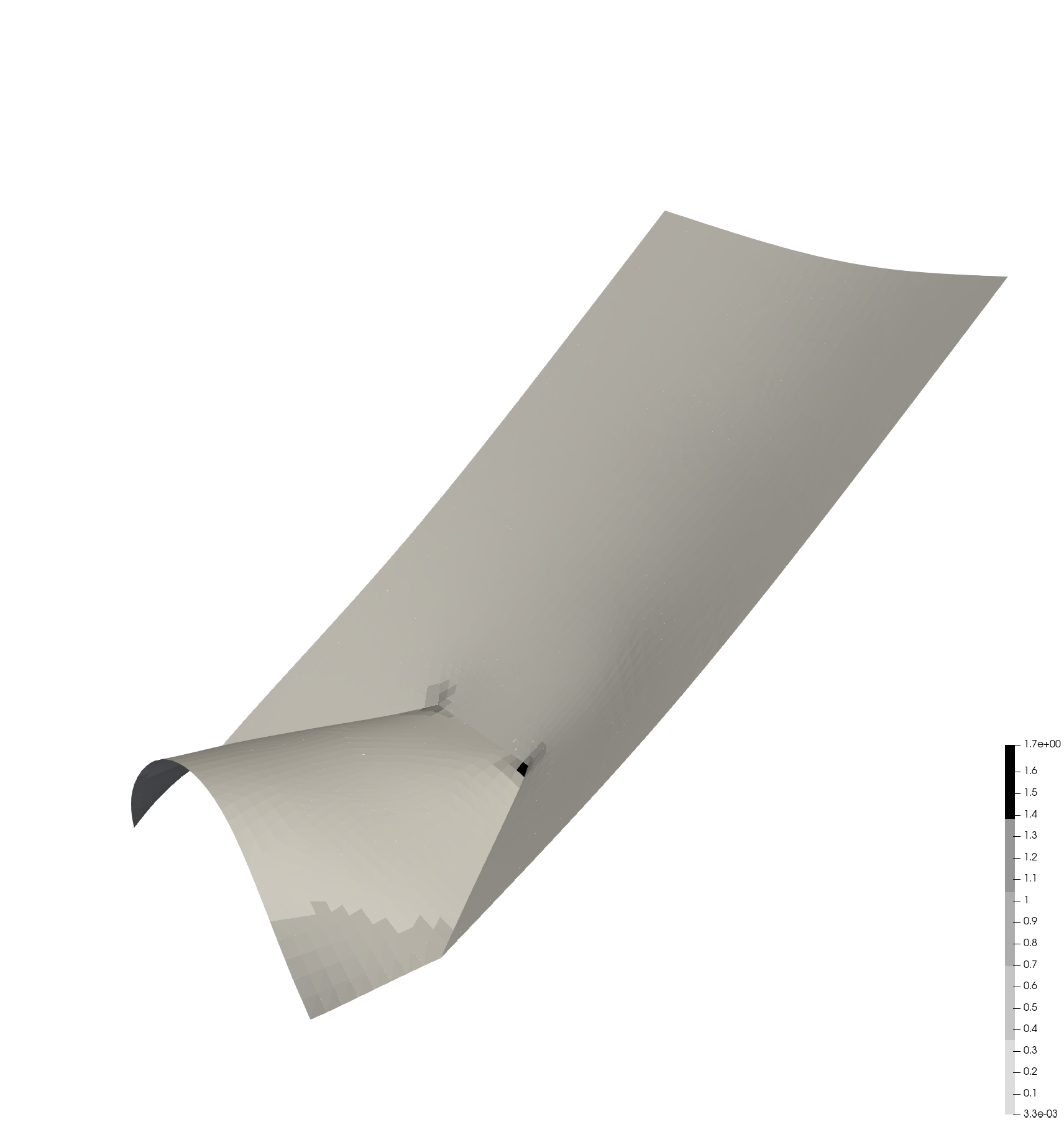} 
\caption{\label{fig:pw_linear_arc} Different approximations of a circular
folding arc using a piecewise quadratic segments (left) and piecewise 
linear segments (right) lead to different energy contributions but 
similar deformations. The colors represent the average curvature over each element of the subdivision. The ranges are from 0 (white) to 0.71 (black) for the quadratic folding line and from 0 (white) to 1.7 (black) for the piecewise linear folding line.}
\end{figure}

\subsection{Paper cutting and bending}
Our second experiment simulates a typical origami folding construction 
with curved arcs which is also known as kirigami folding which includes cutting
and bending a piece of paper.
In our example a square domain with a square hole is prepared using four arcs
that connect midpoints of the outer boundary with the corners of the 
inner boundary, cf.~Figure~\ref{fig:ex_flower_setup}. The precise
settings are as follows.

\begin{example}[Flower configuration]\label{ex:flower}
Let $S = (0,16) \times (0,16) \setminus S'$, where $S'$ is the square with 
defined by the vertices $x_1= (6,7)$, $x_2=(7, 10)$, $x_3=(10,9)$, and $x_4=(9,6)$. 
We use four cubic Bezier curves that connect the midpoints $x_{M,1} = (8,0)$, 
$x_{M,2} = (16,8)$, $x_{M,3} = (8,16)$, and $x_{M,4}= (0,8)$ of the outer sides of
$S$ with the points $x_3,x_2,x_1,x_4$, respsectively, using suitable control
points, e.g., for the first arc $\Sigma^1$ we use 
\[\begin{split}
\hx_{1,1} &= (10+3\cos(\alpha)-\sin(\alpha),9-\cos(\alpha)-3\sin(\alpha)), \\
\hx_{1,2} &= (8+3.162\cos(\alpha),3.162\sin(\alpha)),
\end{split}\]
with $\a=\pi/6$. Control points for the arcs $\Sigma^\ell$, $\ell=2,3,4$, are
obtained via rotational point symmetry, cf. Figure~\ref{fig:ex_flower_setup}. Compressive
boundary conditions with $s=60\%$ compression rate are imposed at the opposite 
boundary points $x_{M,1}$ and $x_{M,3}$.
\end{example}

The setting and a photo of the result of a real experiment corresponding
to Example~\ref{ex:flower} are shown in Figure~\ref{fig:ex_flower_setup}.

\begin{figure}[htb]
\includegraphics[width=0.9\textwidth]{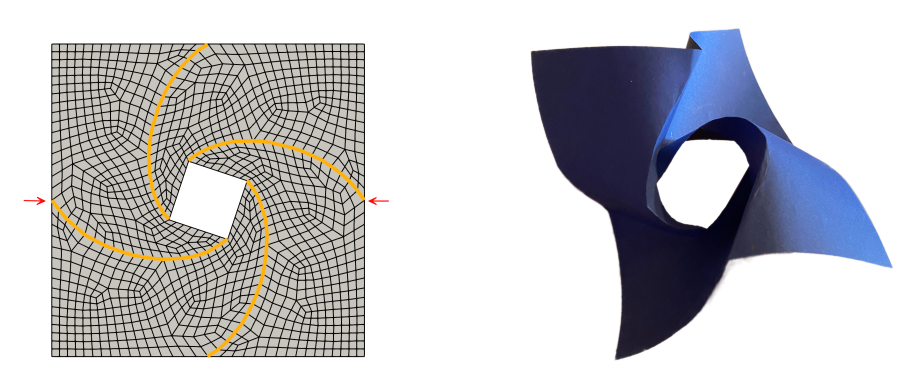}
\caption{\label{fig:ex_flower_setup} Geometric setting of Example~\ref{ex:flower} 
(left) and result of a real experiment with $s=60\%$ compression rate (right).}
\end{figure}

Numerical solutions for Example~\ref{ex:flower} for different compression 
rates are shown in Figure~\ref{fig:ex_flower_stationary}. We used a triangulation
with~1904 elements, a pseudo-time step $\tau=0.025$,
termination tolerances $\veps_{stop} = 0.3$, $\veps_{pp} =0.5$. The 
discrete, nearly isometric deformations obtained with our numerical
scheme reveal a remarkable similarity to configurations obtained in real 
experiments. 

\begin{figure}[htb]
\includegraphics[width=0.45\textwidth]{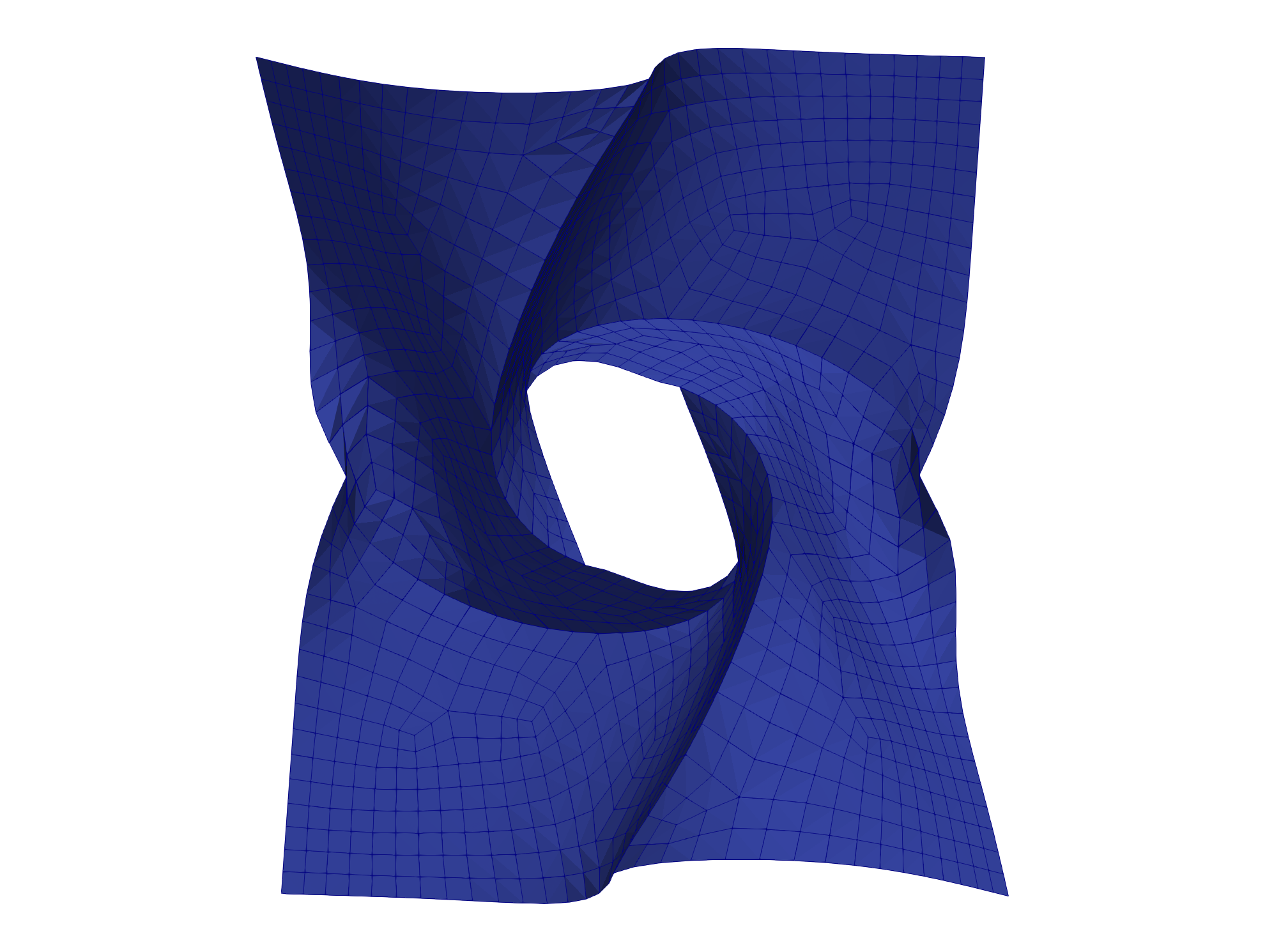} \vspace*{-3mm}
\includegraphics[width=0.45\textwidth]{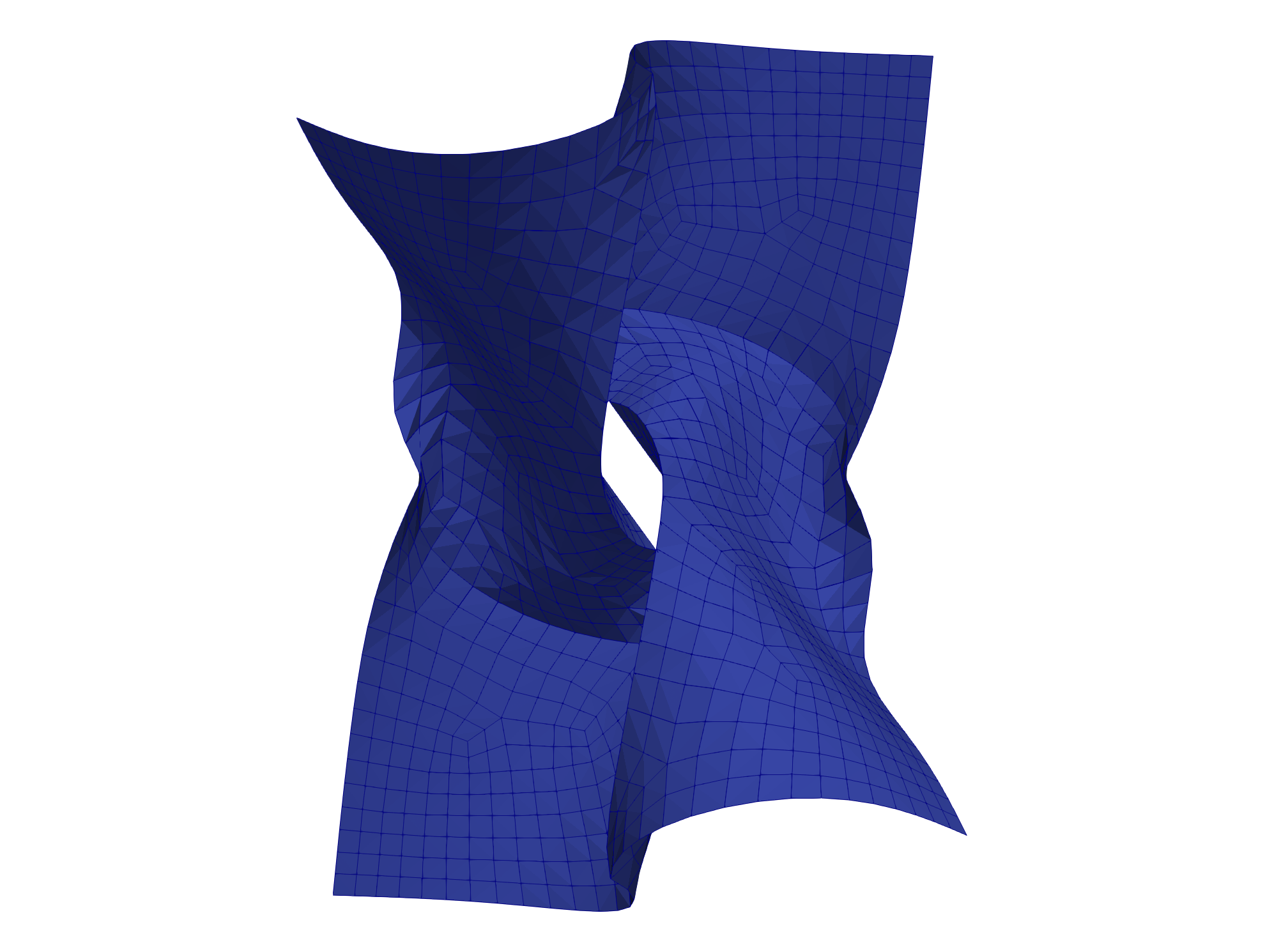}\\[3mm]
\includegraphics[width=0.45\textwidth]{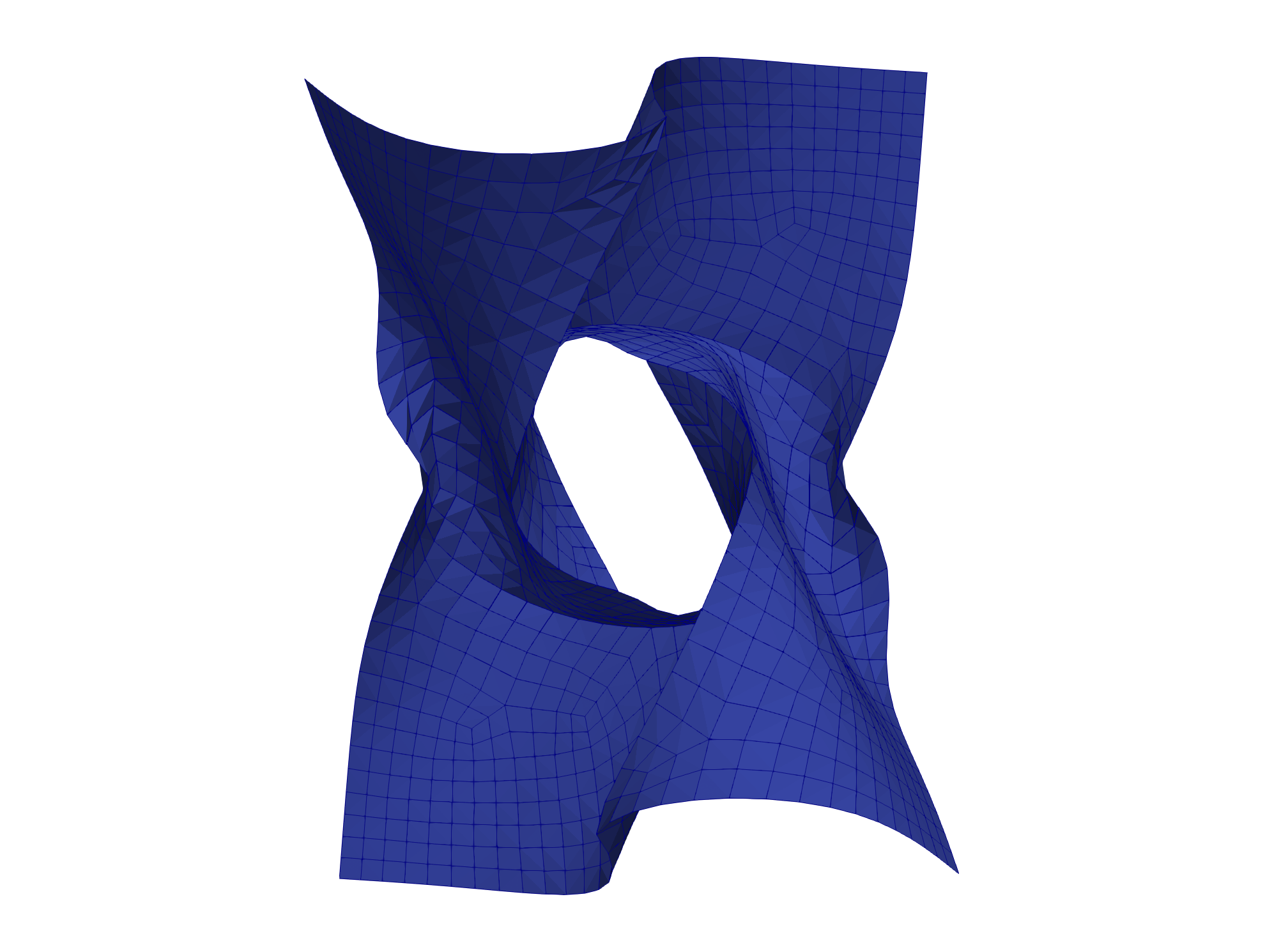}\vspace*{-3mm}
\includegraphics[width=0.45\textwidth]{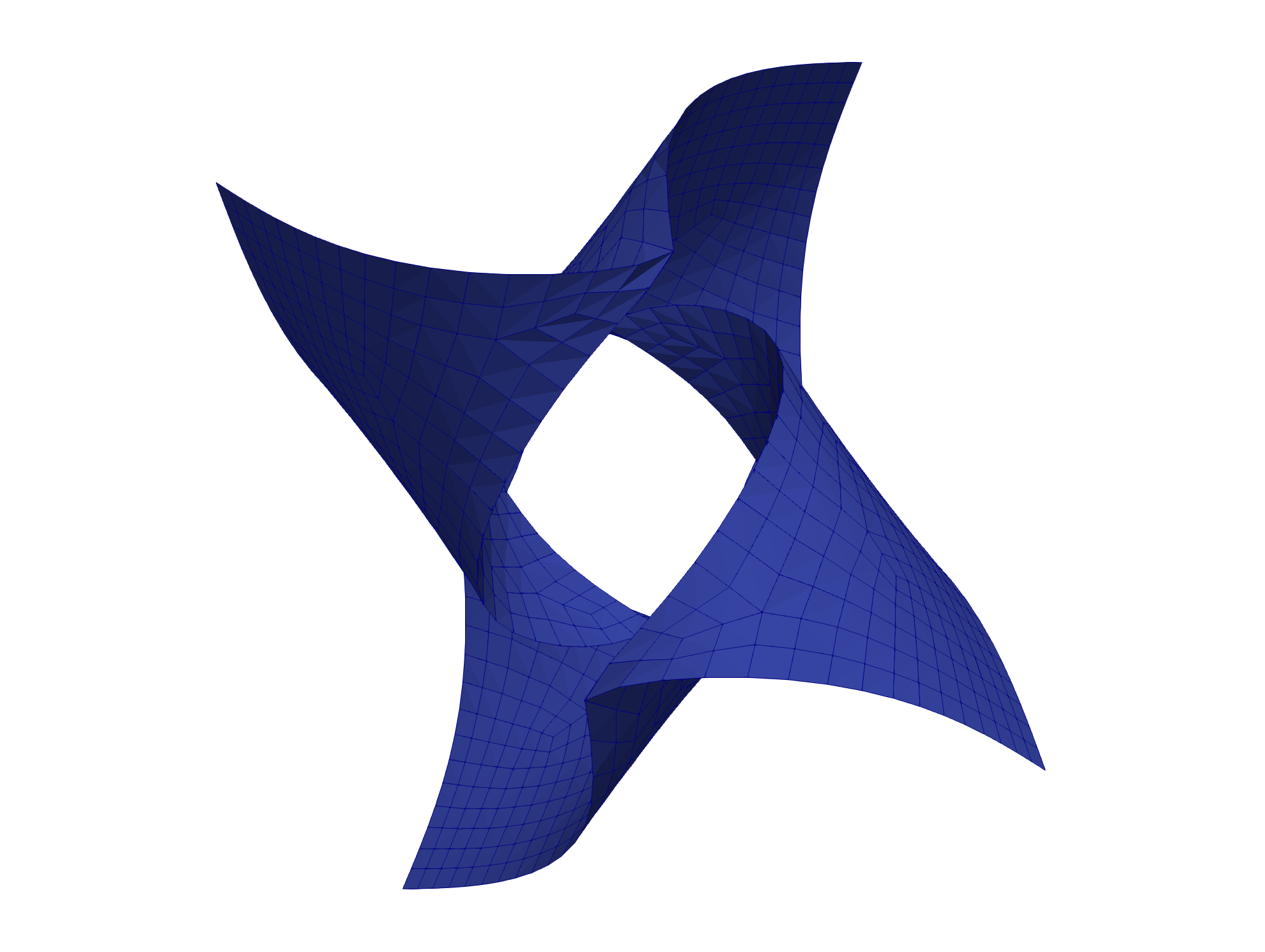} 
\caption{\label{fig:ex_flower_stationary} 
Discrete, nearly stationary deformations for $37.5\%$, $55\%$, $57.5\%$, and $60\%$ 
compression rates (left to right, top to bottom).}
\end{figure}

\subsection*{Acknowledgments} 
The authors SB and PH acknowledge support by the DFG via the priority programme
SPP 2256 {\em Variational Methods for Predicting Complex Phenomena in Engineering 
Structures and Materials} (BA 2268/7-1, HO 4697/2-1).
The author AB is partially supported by NSF grant DMS-2110811.

\section*{References}
\printbibliography[heading=none]

\end{document}